\documentclass[reqno]{amsart}
\UseRawInputEncoding

\usepackage{amsmath,amsfonts,amssymb,amsthm,amscd,latexsym,cite}
\usepackage{mathrsfs}
\usepackage{color}
\newtheorem{thm}{Theorem}[section]
\newtheorem{theorem}[thm]{Theorem}

\newtheorem{lemma}[thm]{Lemma}

\newtheorem{corollary}[thm]{Corollary}
\newtheorem{proposition}[thm]{Proposition}
\newtheorem{definition}[thm]{Definition}

\newtheorem{rem}[thm]{Remark}
\newtheorem{question}{Question}
\newtheorem{example}[thm]{Example}
\newcommand{\cA}{{\mathcal A}}

\newcommand{\cE}{{\mathcal E}}

\newcommand{\cH}{{\mathcal H}}

\newcommand{\cM}{{\mathcal M}}

\newcommand{\cT}{{\mathcal T}}

 \usepackage{color}
  
 \newcommand{\norm}[1]{\left\lVert#1\right\rVert}
\usepackage{hyperref}					
\hypersetup{colorlinks,
	linkcolor=blue,%
	citecolor=blue}

 \usepackage{float}
\begin{document}
\title[Norms of multiplication operators]{Norms of multiplication operators:  answering Fialkow--Loebl question}

\author[Jinghao Huang]{J. Huang}\thanks{Y. Zhu is the corresponding author. J. Huang was supported by the NNSF of China  (No. 12031004, 12301160 and  12471134). F. Sukochev was supported by the ARC (DP230100434)}
\address{ Institute for  Advanced Study in  Mathematics of HIT, Harbin Institute of Technology, Harbin, 150001, China}
\email{{\color{blue}jinghao.huang@hit.edu.cn}}
\email{{\color{blue}xxurann@stu.hit.edu.cn}}
\email{{\color{blue}24s012030@stu.hit.edu.cn}}

\author[Fedor A. Sukochev]{F. Sukochev} 
\address{School of Mathematics and Statistics, University of New South Wales, Sydney, Australia}
\email{{\color{blue}f.sukochev@unsw.edu.au}}

\author[Ran Xu]{R. Xu}

\author[Yunpeng Zhu]{Y. Zhu} 

\subjclass[2020]{46L10; 46L52; 47B47. \hfill 
}

\keywords{multiplication operator;    symmetrically normed operator 
	space; logarithmic submajorisation; uniform submajorisation.}

\begin{abstract} 

Let $\cM$ be a
factor equipped with a semi-finite  faithful normal trace $\tau$.
Let $E(0,\infty)$ be a symmetrically normed
function space and $E(\cM,\tau)$ be the corresponding symmetrically normed
operator space.
Suppose that  $a, b$ are  $\tau$-measurable operators affiliated with $\cM$.
It is shown  that  the range of  the multiplication operator  $S_{a,b}: x\mapsto axb$  on $\cM$ is contained in 
 $E(\cM, \tau)$ if and only if       
 $\mu(a)\mu(b)$ belongs to $E(0, \infty)$, where $\mu(x)$ stands for the generalized singular value function of a $\tau$-measurable operators $x$ affiliated with $\cM$.
 Moreover, we have 
 \begin{align*}
 	\norm{S_{a,b}}_{\cM\to E(\cM,\tau)}=
 	\norm{   \mu(a )\mu(  b) }_{E (0,\infty)   },
 \end{align*}
 which answers a question by Fialkow and Loebl (1984).
We also consider the quasi-normed case, and show that the natural quasi-norm of weak $L_p$-space, $0<p<\infty$, is not monotone with respect to the logarithmic submajorisation.
\end{abstract}
\maketitle

\section{Introduction}
\subsection{Background of multiplication operators}
Let $B(\cH)$ be the $*$-algebra of all bounded linear operator on a Hilbert space $\cH$.  
The {\it multiplication operator} $ S_{a, b}$, for  $a,b \in B(\cH)$,  is defined by $$S_{a, b}(x)=axb,$$
 for all $x\in B(\cH)$ \cite{Harte}.
 The range and norm of multiplication operators (and its generalizations, the so-called elementary operators) have been subjects of extensive research in operator theory (see e.g. \cite{F85,ST,F84,CM,EFHMMS} and references therein).
Fialkow and Loebl  proved in \cite[Theorem 5.6]{F84} that,  the range of $S_{a, b}$ is contained in a proper two-sided ideal $ \mathscr{J}$ of $B(\cH)$ if and only if $\mu(a)\mu(b) \in J$,
where $\mu(\cdot)$ stands the sequence $\{\mu_n(\cdot)\}_{n = 1}^{\infty}$ of singular values of an operator, 
and $J$ is the symmetric sequence space corresponding to  $\mathscr{J}$\cite{Kalton_S,LSZ}. Moreover, if the range of $S_{a, b}$ is contained in the Schatten $p$-class $C_p$, $1\le p<\infty $,
then we have 
\begin{align}\label{C_p}
	\norm{S
 	_{a, b}}_{B(\mathcal{H}) \to C_p}=
 	\norm{\mu(a)\mu(b)}_{\ell_p},
\end{align}
see \cite[Theorem 5.7]{F84}. 

 Fialkow and Loebl posed the following question.
\begin{question}\label{q:FL}
	\textit{We have not as yet determined whether Theorem~5.7 can be extended to all of the $C_\Phi$ ideals. More generally, the calculation of the norm of $S_{a,b}:B(\cH)\to  \mathscr{J}$ for an arbitrary norm ideal $\mathscr{J}$ is an open problem \cite[p.~572]{F84}.
    $\cdots \cdots$
    Concerning the operator $S_{a,b}$, is the identity $\left\|S_{a,b}\right\| = \Phi(\mu(a)\mu(b))$ valid \cite[p.~577]{F84}}? 
\end{question}
Here,   the $C_\Phi$ are operator  ideals  in $B(\cH)$  defined in \cite{Gohberg}.
In \cite[Theorem~5.6]{F84}, Fialkow and Loebl established the following estimate for the norm of $S_{a,b}$:
\begin{align}\label{5.6}
\Phi(\mu(a)\mu(b)) \leq \left\|S_{a,b}\right\|_{B(\cH)\to C_\Phi} \leq 2  \Phi(\mu(a)\mu(b)).
\end{align}
 The main purpose of the present paper is to answer Fialkow--Loebl question and its semifinite  counterpart. 
\subsection{Multiplication operators on   symmetrically normed operator spaces}
Let $\cM$ be a factor on a Hilbert space ${\cH}$,  equipped with a semi-finite  faithful normal trace $\tau$
and let $E(0,\infty)$  be a symmetrically normed   function space on $(0,\infty)$  (see Definition~\ref{Sym} below or \cite{DPS}).
The associated symmetrically normed
operator space  $E(\cM,\tau)$ is defined as a subset of the $\ast$-algebra $S(\cM,\tau)$ of $\tau$-measurable operators affiliated with $\cM$ by
\[
x\in E(\cM,\tau)\quad\Leftrightarrow\quad \mu(x)\in E(0,\infty), 
\]
where $\mu( x)$ is the  singular value function of $x$ \cite{Kalton_S, LSZ}. 
The associated norm of $E(\cM,\tau)$ is given by \cite{Kalton_S, LSZ}
\[
\left\|x\right\|_{E(\cM,\tau)} = \|\mu(x)\|_{E (0,\infty)   }.
\]

For $a,b\in S(\cM, \tau)$, let $S_{a,b}$  denote the multiplication operator 
 on $ \cM $ defined by 
$$S_{a,b}(x)=axb.$$
Our first result, Theorem~\ref{main} below, provides a characterization of the inclusion $\mathrm{Ran}(S_{a,b}) \subseteq E(\cM,\tau)$ and an explicit formula for the norm of $S_{a,b}$ with respect to a symmetric space.
In the proof of \cite[Theorem 5.7]{F84}, 
the inequality 
\begin{align}\label{sjab}
\sum_{j = 1}^{\infty}{\mu_j(ab)}^p \leq \sum_{j = 1}^{\infty}{\mu_j(a)}^p{\mu_j(b)}^p, ~
 a, b \in B(\cH),~p \geq 1, 
 \end{align}
  plays a key role in establishing 
$\norm{S
 	_{a, b}}_{B(\mathcal{H}) \to C_p} \leq
 	\norm{\mu(a)\mu(b)}_{\ell_p}.$ 
However, inequality  \eqref{sjab} is not applicable in the setting of   general symmetric spaces. In \cite[Theorem 7]{Sukochev16}, a stronger result was established:
$$ab\vartriangleleft\mu(a)\mu(b), ~\forall a, b\in S(\cM,\tau) , $$
where $\vartriangleleft$  stands for the {\it uniform submajorisation} introduced in \cite{Kalton_S} (and thoroughly discussed in \cite[Chapter 3]{LSZ}).
This allows us to consider a more general class of symmetrically normed operator spaces (in particular, ideals in $B(\cH)$) than those considered by  Fialkow and Loebl in \cite{F84} without the restrictive conditions imposed on such ideals in the books of Schatten~\cite{Schatten}, Gohberg and Krein\cite{Gohberg} and Simon~\cite{Simon79,Simon05}.

 \begin{theorem}\label{main}
 	Suppose that
 	$\cM$ is a factor  equipped with a semi-finite faithful normal trace $\tau$.
	Let $E(0,\infty)$ be a symmetrically normed function space. 
    For any  $a,b \in S(\cM,\tau)$,
 	 the range of $S_{a,b}$ is contained in 
 	$E(\cM, \tau)$ if and only if 
 	$\mu(a)\mu(b)\in E(0, \infty)$.
 	Moreover,  we have
 	\begin{align*}
 		\norm{S_{a,b}}_{\cM\to E(\cM,\tau)}=
 		\norm{   \mu(a )\mu(  b) }_{E (0,\infty)   }.
 	\end{align*}
 \end{theorem}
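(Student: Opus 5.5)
Both inequalities are needed, together with the two implications of the range statement. The upper bound is the direct one. For $x\in\cM$ with $\|x\|_\infty\le 1$ we have $\mu(axb)\le \mu(a)\,\|x\|_\infty$ pointwise on the left-hand factor, since $\mu(ax)\le\|x\|_\infty\mu(a)$. We then apply the uniform submajorisation $ab\vartriangleleft\mu(a)\mu(b)$ of \cite[Theorem 7]{Sukochev16} to the pair $(ax,b)$:
\[
axb\vartriangleleft \mu(ax)\mu(b)\le \|x\|_\infty\,\mu(a)\mu(b).
\]
A symmetrically normed space is monotone with respect to uniform submajorisation; this is the Kalton--Sukochev theorem \cite{Kalton_S}, \cite[Chapter 3]{LSZ}, which holds for every symmetric norm and needs no Fatou property. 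So if $\mu(a)\mu(b)\in E(0,\infty)$, then $axb\in E(\cM,\tau)$ and
\[
\|axb\|_E\le\|x\|_\infty\|\mu(a)\mu(b)\|_E .
\]
This gives the inclusion $\mathrm{Ran}(S_{a,b})\subseteq E(\cM,\tau)$ and the bound $\|S_{a,b}\|\le\|\mu(a)\mu(b)\|_E$.

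For the lower bound (and the converse inclusion) we construct, for each $\varepsilon>0$, a partial isometry $x=v\in\cM$ with $\mu(avb)\ge\mu(a)\mu(b)$ approximately. Take polar decompositions $a=u|a|$ and $b=w|b|$, so that $\mu(avb)=\mu(|a|\,v\,|b^*|)$; this reduces the problem to positive $a$ and $b$. Since $\cM$ is a factor, the spectral projections of $|a|$ and $|b^*|$ can be compared, and the aim is to transfer the spectral decomposition of $|b^*|$ onto that of $|a|$. The key step is the following. Approximate $|a|$ and $|b^*|$ from below by step operators
\[
a_n=\sum_k \alpha_k p_k,\qquad b_n=\sum_k\beta_k q_k ,
\]
where the $p_k$ and $q_k$ are mutually orthogonal spectral projections of $|a|$ and $|b^*|$ chosen so that $\tau(p_k)=\tau(q_k)$ and the partitions are arranged in decreasing order. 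The $\alpha_k,\beta_k$ are lower values, so that $\alpha_k p_k\le |a|p_k$ and $\beta_kq_k\le q_k|b^*|$, and the step functions $\sum\alpha_k\chi_{I_k}$ and $\sum\beta_k\chi_{I_k}$ lie below $\mu(a)$ and $\mu(b)$ while converging to them. Equal traces are possible in a factor after refining the partition, provided the relevant spectral projections are nonatomic or have matching traces; in the type I case we work with eigenvalues directly. Since $\tau(p_k)=\tau(q_k)$ in a factor, there are partial isometries $v_k$ with $v_k^*v_k=q_k$ and $v_kv_k^*=p_k$. Set $v=\sum_k v_k$. Then
\[
p_k|a|vv^* |b^*|q_k \text{ and } a_n v b_n=\sum_k\alpha_k\beta_k v_k ,
\]
and $\mu\bigl(\sum_k\alpha_k\beta_kv_k\bigr)=\sum_k\alpha_k\beta_k\chi_{I_k}$. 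Compressing by the projection $P=\sum p_k$ gives $P|a|v|b^*|Q=\sum_k p_k|a|v_k|b^*|q_k$. Since $|a|$ commutes with $p_k$ and $|b^*|$ with $q_k$, each summand equals $|a|p_kv_kq_k|b^*|\ge$ in singular values $\alpha_k\beta_k v_k$. Using $\mu(PyQ)\le\mu(y)$ we obtain $\mu(|a|v|b^*|)\ge\sum_k\alpha_k\beta_k\chi_{I_k}$.

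It follows that $\|S_{a,b}\|\ge\|\sum_k\alpha_k\beta_k\chi_{I_k}\|_E$ for every such approximation. The main obstacle is passing from these step functions to $\mu(a)\mu(b)$ itself, because a general symmetric norm need not have the Fatou property. I would handle this by taking a single $v$ adapted to a full, countable partition of $(0,\infty)$ into intervals on which $\mu(a)$ and $\mu(b)$ are nearly constant up to a factor $(1+\varepsilon)$, so that $\mu(a)\mu(b)\le(1+\varepsilon)^2\sum_k\alpha_k\beta_k\chi_{I_k}$ pointwise, the inequality being arranged with a multiplicative margin on each interval. Monotonicity of the norm then gives
\[
\|\mu(a)\mu(b)\|_E\le(1+\varepsilon)^2\|\mu(avb)\|_E .
\]
This step also yields the converse inclusion: if $\mathrm{Ran}(S_{a,b})\subseteq E(\cM,\tau)$, then $avb\in E(\cM,\tau)$, and hence $\mu(a)\mu(b)\in E(0,\infty)$. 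Where $\mu(a)$ or $\mu(b)$ vanishes, or on tails with infinite-trace spectral projections, a separate but routine treatment is needed, using comparability of projections in the factor.
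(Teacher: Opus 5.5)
Your proposal is correct and follows essentially the paper's route. The upper bound is exactly the paper's argument via $axb\vartriangleleft\mu(ax)\mu(b)\le\|x\|_\infty\mu(a)\mu(b)$ and monotonicity of symmetric norms under uniform submajorisation. The lower bound is Lemmas~\ref{compact}--\ref{EOEM2}: reduce to $|a|,|b^*|$, take a partition with multiplicative control, match equal-trace projections commuting with $|a|$ and $|b^*|$ via comparability in the factor, and glue a single partial isometry, which gives a pointwise bound so no Fatou property is needed. The tail case you defer as ``routine'' ($\mu^a_\infty>0$, $t^a<\infty$) is the one the paper handles separately in Lemma~\ref{2infnity}, by moving the whole infinite-trace piece near level $\mu^a_\infty$ with one partial isometry rather than splitting it into commuting subprojections.
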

In the special setting when $\cM = B(\cH)$ (and ideals are understood in \cite{F84,Gohberg,Schatten}), Theorem \ref{main} answers Question \ref{q:FL} and  improves  inequality \eqref{5.6}. Note that  operators $a$ and $b$ are not required to be compact as in \cite{F84}, and $E(\cM,\tau)$ is not necessarily complete. 
\subsection{Multiplication operators on symmetrically quasi-normed operator spaces}
There are numerous examples of spaces which are only quasi-normed but not normed, e.g., the weak $L_p$-spaces, $0< p<\infty  $,  and $L_p$-spaces, $0<p<1$. 
It is natural to consider Fialkow and Loebl's question, Question \ref{q:FL} above, in the setting of symmetrically quasi-normed ideals in $B(\cH)$, and ask whether the formula in Theorem~\ref{main} still holds. 
Surprisingly, it turns out that  this is not the case. 
Consider   the natural quasi-norm  of the weak $L_p$-space $L_{p,\infty}(0,\infty ) $ ($p>0$) given by 
$$
\norm{f}_{p,\infty}=\sup_{t>0} t^{1/p} \mu(t;f),~f~\mbox{is~a measurable function on~} (0,\infty).
$$
The recent resolution \cite{SZ21} of a problem  due to Simon\cite{Simon79,Simon05} concerning the optimal constants in the H\"older inequalities for $\norm{\cdot}_{p,\infty }$  yields that 
$\norm{\cdot}_{p,\infty}$ does not satisfy the formula in Theorem \ref{main} (see Example~\ref{Lp} below).

On the other hand, we  consider Fialkow--Loebl question in the setting of  a wide class of symmetrically quasi-normed spaces, i.e., 
 symmetrically quasi-normed operator spaces  whose quasi-norms are monotone with respect to the logarithmic submajorisation (denoted by $\prec\prec_{\log}$, see e.g. \cite{SZ18,Hiai,DDSZ} or Section \ref{s2} for the definition), and obtain the following result.
\begin{theorem}\label{main2}
		Suppose that
		$\cM$ is a   factor equipped with a semi-finite  faithful normal  trace~$\tau$. Let $E(0,\infty)$ be a symmetrically quasi-normed function space whose quasi-norm is monotone with respect to $\prec\prec_{\log}$.  
		For any $a,b \in S(\cM,\tau)$,
		the range of $S_{a,b}$ is contained in 
 	    $E(\cM, \tau)$ if and only if 
 	    $\mu(a)\mu(b)\in E(0,\infty)$.
		Moreover, 
			\begin{align}\label{mainequal2}
			\norm{S_{a,b}}_{\cM\to E(\cM, \tau)}=
			\norm{  \mu(a)\mu(b) }_{E(0,\infty)  }.
		\end{align}		
	\end{theorem}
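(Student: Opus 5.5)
The plan is to prove two inequalities: an upper bound valid for every contraction $x$, and a matching lower bound obtained by constructing near-extremal contractions.

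\emph{Upper bound.} Fix $x\in\cM$ with $\|x\|_\infty\le 1$. The logarithmic submajorisation form of Horn's inequality in semifinite von Neumann algebras (see \cite{Hiai,DDSZ}) gives
\[
\mu(axb)\prec\prec_{\log}\mu(a)\mu(x)\mu(b).
\]
Since $\mu(x)\le \|x\|_\infty\le 1$ pointwise, the right-hand side is pointwise dominated by $\mu(a)\mu(b)$. The function $\mu(a)\mu(b)$ is decreasing, so it equals its own rearrangement, and pointwise domination of decreasing functions implies $\prec\prec_{\log}$. Transitivity then yields $\mu(axb)\prec\prec_{\log}\mu(a)\mu(b)$. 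The monotonicity hypothesis on the quasi-norm gives $\mu(axb)\in E(0,\infty)$ whenever $\mu(a)\mu(b)\in E(0,\infty)$, together with
\[
\|axb\|_{E(\cM,\tau)}\le\|\mu(a)\mu(b)\|_{E(0,\infty)}.
\]
This proves the sufficiency part of the range statement and the inequality ``$\le$'' in \eqref{mainequal2}. I would first check carefully the measurability and the $\tau$-measurable setting of Horn's inequality, taking $a,b$ unbounded but $\tau$-measurable.

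\emph{Lower bound and necessity.} The aim is to find contractions $x$ for which $\mu(axb)$ approximates $\mu(a)\mu(b)$ from below. Take polar decompositions $a=u|a|$ and $b=v|b|$. Because $\cM$ is a factor, we can work with spectral projections of $|a^*|$, or equivalently $|a|$, and of $|b^*|$. Using non-atomicity or the type I structure, I would realise $\mu(a)$ and $\mu(b)$ via decreasing families of spectral projections $e_t$ of $|a|$ and $f_t$ of $|b^*|$ with $\tau(e_t)=\tau(f_t)=t$, possibly after approximating by step functions. The factor property supplies partial isometries $w$ with $w^*w\le f$ and $ww^*\le e$ that match these families in a ``diagonal'' way, so that $|a|\,w\,|b^*|$ behaves like the multiplication operator by $\mu(a)\mu(b)$.

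Concretely, I would first reduce to step approximations: take $a_n$ with $\mu(a_n)\uparrow\mu(a)$ built from finitely many spectral projections, and similarly for $b$. For these, choose a partial isometry $w_n\in\cM$ intertwining the corresponding projections level by level, and set $x_n=u_a'\,w_n\,v_b'$, where the outer factors are chosen so that they cancel the polar parts. The goal is $\mu(ax_nb)\ge\mu(a_n)\mu(b_n)$, giving
\[
\|S_{a,b}\|\ge\|\mu(a_n)\mu(b_n)\|_E.
\]
Passing to the limit requires a Fatou-type property of $E$, which is not assumed. I would therefore try to avoid the limit by constructing the isometry directly on the full spectral measure. In the type II case this uses a normal $*$-isomorphism between the abelian algebras generated by $|a|$ and $|b^*|$ that is compatible with the distribution functions; in the type I case a diagonal matching of eigenvectors gives exact equality. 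If $\mu(a)\mu(b)\notin E$, the same $x$ produces $axb\notin E(\cM,\tau)$, which proves necessity.

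\emph{Main obstacle.} I expect the main obstacle to be this exact construction: producing a single contraction $x$ with $\mu(axb)=\mu(a)\mu(b)$, rather than only an approximate one, uniformly across type I and type II factors and when $\tau(\mathbf 1)<\infty$ or when $a$ or $b$ has a nontrivial kernel. I would first try to handle it via the standard fact that in a non-atomic or factor setting every decreasing $\mu$-profile is realised by a ``canonical'' trace-preserving embedding of $L_\infty(0,\tau(\mathbf 1))$.
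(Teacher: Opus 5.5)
Your upper bound is correct and is the paper's own argument: Theorem~\ref{Weyllog} applied to $ax$ and $b$, combined with $\mu(ax)\le\norm{x}_{\cM}\mu(a)$ and the assumed monotonicity. The gap is in the lower bound and in necessity. You rightly note that finitely-stepped approximations $\mu(a_n)\mu(b_n)\uparrow\mu(a)\mu(b)$ would need a Fatou property. Your way out is a single contraction $x$ with $\mu(axb)=\mu(a)\mu(b)$ exactly, but such an $x$ does not exist in general, not even in type I.

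Let $\cM=B(L_2[0,1])$, let $a$ be multiplication by $1+t$, and let $b$ be the rank-one projection onto a unit vector $\xi_0$. Since $e^{a}(r,\infty)$ has infinite trace for every $r<2$, we get $\mu(a)\equiv 2$ and $\mu(a)\mu(b)=2\chi_{[0,1)}$. For every contraction $x$, however, $\mu(axb)=\norm{ax\xi_0}\chi_{[0,1)}$, and $\norm{a\eta}<2\norm{\eta}$ for $\eta\neq0$. Hence $\mu(axb)<\mu(a)\mu(b)$ on $[0,1)$ for every $x$. Here $a$ has no eigenvectors to match, and no trace-preserving embedding $\pi$ with $\pi(\mu(a))=|a|$ exists. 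This is exactly the regime $\mu^a_\infty>0$, $t^a<\infty$, which your plan does not address.

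In type II the spatial implementation you want can also fail. Suppose $|a|$ and $|b^*|$ have full support in a II$_1$ factor and generate non-conjugate masas with strictly decreasing $\mu$. Then any $w\in\cM$ carrying the spectral chain of $|b^*|$ onto that of $|a|$ would be a unitary conjugating the two masas, which is impossible. So the supremum defining $\norm{S_{a,b}}$ need not be attained, and the argument has to be approximate yet free of any Fatou property.

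The missing idea is a \emph{uniform multiplicative} approximation.
\begin{itemize}
\item After splitting into intervals on which $\mu(a)$ and $\mu(b)$ are continuous, Lemma~\ref{lemma:decomposition} gives a bi-infinite sequence $\{s_j\}_{j\in\mathbb{Z}}$ in $(0,\tau(s(b)))$. On each $[s_j,s_{j+1})$ both $\mu(a)$ and $\mu(b)$ drop by at most the factor $\tfrac{N-1}{N}$.
\item The sequence must be infinite in both directions. Finitely many steps cannot control these ratios near $0$, where $\mu$ may be unbounded, or near the end of the support, where $\mu$ may tend to $0$.
\item One takes nested projections $e_j$, $e'_j$ of trace $s_j$, sandwiched between spectral projections of $a$ and of $b$ respectively, so they commute with them.
\item In a factor, equal trace implies equivalence. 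So partial isometries between $e'_{j+1}-e'_j$ and $e_{j+1}-e_j$ can be summed into one $w_N$ with $w_Ne'_jw_N^*=e_j$ for all $j$.
\item This yields $(\tfrac{N-1}{N})^2\mu(a)\mu(b)\le\mu(aw_Nb)\le(\tfrac{N}{N-1})^2\mu(a)\mu(b)$ pointwise (Lemma~\ref{compact}).
\item The flat-tail case is handled in Lemma~\ref{2infnity}. There the infinite-trace projection $e^a(\mu^a_\infty-\delta,\mu^a_\infty]$ is paired with $\mathbf{1}-e'_1$, losing only the factor $(\mu^a_\infty-\delta)/\mu^a_\infty$.
\end{itemize}
This gives, for each $\varepsilon>0$, a partial isometry $w_\varepsilon$ with $\mu(a)\mu(b)\le(1-\varepsilon)^{-1}\mu(aw_\varepsilon b)$ pointwise. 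Rearrangement invariance of $\norm{\cdot}_E$ then yields both $\mu(a)\mu(b)\in E(0,\infty)$ and $(1-\varepsilon)\norm{\mu(a)\mu(b)}_E\le\norm{S_{a,b}}$. Neither a Fatou property nor an exact extremiser is needed; letting $\varepsilon\to0$ finishes the proof.
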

Most of  known examples of  symmetrically (quasi)-normed
function spaces $E(0, \infty)$ such as Lorentz space $L_{p,q}(0,\infty)$, $ 0<p, q<\infty $ (see Example~\ref{Lorentz} below)  and the weighted Lorentz space $\Lambda ^p_{\omega }$, $ 0<p<\infty $ (see \cite[Lemma 6.4]{Huang2}), have (quasi)-norms that are monotone with respect to $\prec\prec_{\log}$. For any $0<p<\infty$, the (quasi)-norm of $L_p(\cM,\tau)$  is also monotone with respect to $\prec\prec_{\log}$ (see e.g. \cite[Lemma 6.3]{Huang2}). Hence, Theorem \ref{main2} extends \cite[Theorem 5.7]{F84} (see equality \eqref{C_p} above). Moreover, we establish that every symmetrically quasi-normed
function space has an equivalent quasi-norm  that is monotone with respect to $\prec\prec_{\log}$ (see Theorem~\ref{log} below), which extends \cite[Proposition 2]{Fack}. 
Recall $\norm{\cdot}_{p,\infty}$ has an equivalent fully symmetric norm (in particular, monotone with respect to $\prec\prec_{\log}$) when $p>1$\cite{Bennett_S} (see also \cite[Proposition 6.7.8]{DPS}). 
However, since $\norm{\cdot}_{p,\infty}$ does not satisfy \eqref{mainequal2}, it follows that the natural quasi-norms of $L_{p,\infty}(0,\infty)$ ($p>0$) fail the monotonicity with respect to $\prec\prec_{\log}$  (see Example~\ref{Lp}), which is unexpected.

\section{Preliminaries}
\label{s2}

In this section,
we recall  notions which are  needed
in this paper. 
General information on von Neumann
algebras and symmetric spaces  can be found in
\cite{LSZ,KPS,LT2,Sukochev17}.
\subsection{$\tau$-measurable operators}

Let \((I, m)\) denote the measure space  \( I = (0, a) \) for some \( a \in (0, \infty] \), endowed with the Lebesgue measure \( m \). Denote by \( L(I, m) \) the space of all measurable functions on \( I \), where functions that agree almost everywhere are identified. Let \( S(I, m) \) (or simply \( S(I) \)) be the linear subspace of \( L(I, m) \) consisting of all functions \( x \) for which there exists some \( s > 0 \) such that
\[
m(\{ t \in I : |x(t)| > s \}) < \infty.
\]

For $x\in S(I)$, $\mu(x)$ denotes the decreasing rearrangement of the function $|x|$~\cite{KPS,LSZ,LT2},   that is,
$$
\mu(t; x)=\inf \left\{s\geq0:\ m(\{|x|>s\})\leq t \right\},\quad t>0.
$$

Let $\cM$ be a semi-finite von Neumann algebra on a  Hilbert space $\cH$,  with a semi-finite  faithful normal trace $\tau$, and 
identity ${\bf 1}$.
Let $P(\mathcal{M})$ denote the lattice of all projections in $\mathcal{M}$, and let
$U(\cM)$ denote the set of all unitary elements in $\cM$.
A closed and densely defined operator $x$, affiliated with $\mathcal{M}$, is called \emph{$\tau$-measurable} if
$\tau(e ^{|x|}(s,\infty))<\infty$ for sufficiently large $s$, where $e ^{|x|}$ denotes  the spectral measure of $|x|$.
The set of all $\tau$-measurable operators is denoted by
$S(\mathcal{M},\tau)$\cite{LSZ,DPS}.
For every $x\in S(\mathcal{M},\tau),$ the {\it singular value function} $\mu(x)$ is defined by setting \cite{DPS,LSZ} 
$$\mu(t; x)=\inf\{s\geq0:\ \tau(e ^{|x|}(s,\infty))\leq t \},\quad t>0.$$

\subsection{Symmetric spaces}

 For the convenience of the reader, we recall that a map $\norm{\cdot}$ from a linear space $X$ into the field $\mathbb{R}$ of real numbers is a \emph{quasi-norm} if for all $x, y \in X$ and scalars $\alpha$, the following properties hold:

\begin{enumerate}
    \item[(i)] $\|x\| \geq 0$, and $\|x\| = 0$ if and only if $x = 0$;
    \item[(ii)] $\|\alpha x\| = |\alpha| \|x\|$;
    \item[(iii)] $\|x + y\| \leq C (\|x\| + \|y\|)$, for some constant $C \geq 1$.
\end{enumerate}
The couple $(X, \norm{\cdot})$ is called a \emph{quasi-normed space}, and the least constant $C$ satisfying the inequality (iii) above is called the \emph{modulus of concavity} of the quasi-norm $\norm{\cdot}$ and is denoted by $C_{X}$.

\begin{definition}\cite{Sukochev17,DPS}
\label{Sym}
We say that $(E(I),\left\|\cdot\right\|_E)$ (or    $(E,\left\|\cdot\right\|_E)$ for brevity) is a symmetrically normed (respectively, quasi-normed)  function space
on $I$ if the following hold:
\begin{enumerate}
\item $E$ is a subspace of $S(I)$;
\item $(E,\left\|\cdot\right\|_E)$ is a normed (respectively, quasi-normed)  space;
\item If $x\in E$ and $y\in S(I)$  are such that $\mu(y)\leq\mu(x),$ then $y\in E$ and $\left\|y\right\|_E\leq \left\|x\right\|_E.$
\end{enumerate} 
\end{definition}

For a symmetrically normed  function space $ E $ and $ x\in E$, if $ y \in S(I)$ is such that $y\vartriangleleft x$, then $y \in E$ and $\norm{y}_E\leq \norm{x}_E$ \cite[Corollary 3.4.3]{LSZ}, where $\vartriangleleft$ stands for the {\it uniform submajorisation} introduced in \cite{Kalton_S}, meaning that there  exists $\lambda \in \mathbb{N} $ such that
\begin{align*}
	\int_{\lambda a}^{b} \mu(s; x) \, ds \leq \int_{a}^{b} \mu(s; y) \, ds, ~ \lambda a \le b. 
\end{align*}

For the general theory of symmetrically normed (respectively, quasi-normed)  function spaces, we refer the reader to \cite{LT2,DPS,KPS}.

\begin{definition}\cite{DPS,Sukochev17}
Let $  \cE$   be a linear   subspace of    $S(\mathcal{M},\tau)$, equipped with a 
norm (respectively, quasi-norm) $\left\|\cdot\right\|_{\cE}$.
We say that
$ \cE $ is a symmetrically  normed (respectively, quasi-normed)
operator space if for $ x\in \cE $  and for every $y\in S(\mathcal{M},\tau)$ with
$\mu(y)\leq\mu(x),$ we have $y\in \cE $
and $\left\|y\right\|_{\cE} \leq
\left\|x\right\|_{ \cE} $.

\end{definition}

Let
$E$ be a symmetrically normed (respectively, quasi-normed)  function   space on $(0,\tau({\bf 1}))$.
Set
$$
E(\mathcal{M},\tau)=\Big\{x\in S(\mathcal{M},\tau):\ \mu(x)\in E\Big\}
,$$
and
\begin{align*}
\left\|x\right\|_{E (\mathcal{M},\tau)}:= \left\|\mu(x)\right\|_E,\quad x \in E(\mathcal{M},\tau).
\end{align*}
The space $E(\mathcal{M},\tau)$ is called the symmetrically normed (respectively, quasi-normed) operator space corresponding to $(E,\left\|\cdot\right\|_{E})$\cite{Kalton_S,Sukochev17}.

 A symmetrically normed (respectively, quasi-normed) function space $E(0,\infty)$ is called a Banach (respectively, quasi-Banach) symmetric function space if it is complete. 
 If $E(0,\infty)$ is complete, then $E(\cM,\tau)$ is also complete\cite{Kalton_S,Sukochev17}.

When $E=L_p(0, \infty)$ for  $1 \leq p \leq \infty$, the associated  space $E(\cM, \tau) $ coincides with the noncommutative $L_p$-space  $L_p(\cM, \tau)$. In particular, $L_\infty(\cM, \tau) = \mathcal{M}$. 
When  $\mathcal{M}$ is $B(\mathcal{H})$, $L_p(\cM, \tau)$  is the   Schatten-$p$ class \cite{LSZ}.

\subsection{The algebra \( L_{\log_+}(\cM, \tau) \) }
Let $\cM$ denote a semi-finite von Neumann algebra, equipped with a  semi-finite  faithful normal trace~$\tau$.
 Set
\[
L_{\log_+}(\cM,\tau) := \{x \in S(\cM, \tau) : \log_+ |x| \in L_1(\cM,\tau)+L_\infty(\cM,\tau)\},
\]
where 
\[
\log_+ t := \max\{\log t, 0\}.
\]
The space $L_{\log_+}(\cM,\tau)$ forms a $*$-algebra \cite{Dykema} with the following inclusions by \cite[p.~ 7]{DDSZ}:
\[
L_1(\cM,\tau)+L_\infty(\cM,\tau)
\subseteq L_{\log_+}(\cM,\tau) 
\subseteq S(\mathcal{M},\tau) .
\]
 If \( E(0,\infty ) \subseteq S(0,\infty ) \) is a symmetrically quasi-normed function space,  
then there exists $q\in (0,\infty)$ such that
$$E(\cM,\tau) \subseteq L_q(\cM,\tau)+L_\infty(\cM,\tau) \subseteq L_{\log_+}(\cM,\tau)$$
(see \cite[Remark 2]{Schechtman} and \cite[Lemma 3.4.1]{Zanin}).

If \( x, y \in S(\cM, \tau) \), then we say that \( y \) is \emph{logarithmically submajorised} by \( x \), if and only if 
\[
\int_0^t \log \mu(s; y) \, ds \leq \int_0^t \log \mu(s; x) \, ds,  \quad \forall t > 0,
\]
denoted by \( y \prec\prec_{\log} x \), see e.g.  \cite[p.~125]{Hiai} and  \cite{DDSZ,Huang2}.

The quasi-norm $\left\|\cdot\right\|_{E(\cM, \tau)}$
on the symmetrically quasi-normed operator space \( E(\cM, \tau) \) 
is said to be \emph{monotone with respect to the logarithmic submajorisation} if whenever \( y \in E(\cM, \tau) \), \( x \in L_{\log_+}(\cM, \tau) \) satisfy \( x \prec\prec_{\log} y \), it follows that \( x \in E(\cM, \tau) \)  and
\( \left\|x\right\|_{E(\cM, \tau)} \leq \left\|y\right\|_{E(\cM, \tau)} \).

\section{Proof of Theorems \ref{main} and \ref{main2}}

For any measurable function $x$ on $(0,\infty)$, the dilation $D _s(x)$, $s>0$,  is defined by\cite[p.~392]{DPS} as:
 $$(D_s(x))(t)=x\left(\frac{t}{s} \right ),\quad t\in(0,\infty).$$

	The following result extends \cite[Lemma 5.5]{F84}. 
	\begin{lemma}\label{E0EM}
		Let $\cM$ be a semi-finite von Neumann algebra,   equipped with a semi-finite  faithful normal trace~$\tau$.  Suppose that  $a, b\in S(\cM,\tau)$.   
		If 
		$\mu(a)\mu(b)$ belongs to the  symmetrically quasi-normed    function space $E(0, \infty)$, then 
		 the range $\mathrm{Ran}(S_{a,b})$  of $S_{a,b}$  is contained in $E(\cM, \tau)$.
	\end{lemma}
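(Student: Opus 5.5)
Fix $x\in\cM$; we must show $axb\in E(\cM,\tau)$, i.e.\ $\mu(axb)\in E(0,\infty)$. The plan is to bound $\mu(axb)$ pointwise by a dilation of $\mu(a)\mu(b)$, multiplied by $\norm{x}_\infty$. Dilations act boundedly on any symmetrically quasi-normed function space, so this bound is enough. We only need pointwise domination $\mu(y)\le\mu(z)$, because property (3) in Definition~\ref{Sym} is exactly what lets us pass membership from $z$ to $y$. No submajorisation is needed at this stage, which is why quasi-normed $E$ is allowed.

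\textbf{Step 1 (pointwise estimate).} By the standard inequalities for generalized singular values (e.g.\ \cite{DPS,LSZ}),
\[
\mu(t+s;\,yz)\le \mu(t;y)\,\mu(s;z),\qquad \mu(t;\,yz)\le \norm{y}_\infty\,\mu(t;z).
\]
Applying these with $t=s$ gives
\[
\mu(2t;\,axb)\le \mu(t;a)\,\mu(t;xb)\le \norm{x}_\infty\,\mu(t;a)\mu(t;b).
\]
Equivalently, $\mu(axb)\le \norm{x}_\infty\, D_2\big(\mu(a)\mu(b)\big)$ pointwise on $(0,\infty)$. The function $\mu(a)\mu(b)$ is decreasing and right-continuous, as a product of two such nonnegative functions. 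It is therefore its own decreasing rearrangement, and so is $D_2(\mu(a)\mu(b))$.

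\textbf{Step 2 (dilation invariance).} It remains to show that $f:=\mu(a)\mu(b)\in E$ implies $D_2 f\in E$; this is the only genuinely nontrivial point.

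For normed $E$ this is classical ($\norm{D_2}\le 2$), but the lemma is stated for quasi-normed $E$, so I would argue directly. Write $D_2 f=g_1+g_2$ with
\[
g_1=\sum_k f(t_k)\chi_{[2k,2k+1)}\ \text{-type pieces},
\]
or, more cleanly, $g_1=D_2f\cdot\chi_{\bigcup_k[2k,2k+1)}$ and $g_2=D_2f\cdot\chi_{\bigcup_k[2k+1,2k+2)}$ (after a suitable discretisation). The aim is that $\mu(g_i)\le f$ up to a shift. A cleaner route is to split $(0,\infty)$ into dyadic-type intervals and compare distribution functions. For every $s>0$,
\[
m(\{D_2f>s\})=2\,m(\{f>s\}).
\]
Hence $D_2 f$ is equimeasurable with $h_1+h_2$, where $h_1$ and $h_2$ have disjoint supports and are each equimeasurable with $f$. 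Concretely, let $\phi_1,\phi_2$ be measure-preserving maps from $(0,\infty)$ onto two disjoint halves of $(0,\infty)$ (for example, interleaving unit intervals) and set $h_i=f\circ\phi_i^{-1}$ on the $i$-th half. Then $h_i\in E$ with $\norm{h_i}_E=\norm{f}_E$, since $\mu(h_i)=f$. The quasi-triangle inequality gives $h_1+h_2\in E$ with norm at most $2C_E\norm{f}_E$. Since $\mu(D_2f)=\mu(h_1+h_2)$, property (3) yields $D_2f\in E$.

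\textbf{Step 3 (conclusion).} Combining the two steps, $\mu(axb)\le \mu\big(\norm{x}_\infty D_2 f\big)$, so property (3) gives $axb\in E(\cM,\tau)$. This holds for every $x\in\cM$, so $\mathrm{Ran}(S_{a,b})\subseteq E(\cM,\tau)$.

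I expect the main care to be needed in Step 2, in the construction of the equimeasurable disjoint copies $h_1,h_2$; the rest is routine. If $\tau(\mathbf 1)<\infty$, the same argument applies after truncating to $(0,\tau(\mathbf 1))$.
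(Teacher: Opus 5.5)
Your proof is correct and follows the paper's argument: the same pointwise bound $\mu(axb)\le \norm{x}_{\cM}\,D_2\big(\mu(a)\mu(b)\big)$, then invariance of $E(0,\infty)$ under the dilation $D_2$, then property (3) of Definition~\ref{Sym}. The only difference is that the paper cites \cite[Lemma 6.1.1]{DPS} for $D_2(\mu(a)\mu(b))\in E(0,\infty)$, whereas you prove it directly by showing $D_2f$ is equimeasurable with a sum of two disjointly supported copies of $f$; this is valid, since membership only needs $E$ to be a linear subspace, but you should delete the abandoned ``$g_1$-type pieces'' attempt at the start of Step~2.
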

	\begin{proof}
			To show that $\mathrm{Ran}(S_{a,b}) \subseteq  E(\cM, \tau)$, it suffices to prove that 
			 $\mu(axb) \in E(0, \infty)$ for all $x \in \cM $.
		Since \(a, b \in S(\cM, \tau)\) and $x\in\cM$,  it follows from \cite[Proposition 3.2.7(iv), (vi)]{DPS} that
	\begin{align*}
		\mu(s; a x b) &
		\leq \mu(s/2; ax)  \mu(s/2; b) \\ &\nonumber
		\leq \norm{x}_{_{\cM}}
		\mu(s/2; a)  \mu(s/2; b) \\ &\nonumber
		= \norm{x}_{_{\cM}}
			\Big(
		D_2
	(
		 \mu(a)
		 \mu(b)
		)
		 	 \Big)
		 (s), 
		 \quad s > 0.
	\end{align*}		
		By assumption that $\mu(a)\mu(b)\in E(0, \infty)$ and by
		  \cite[Lemma 6.1.1]{DPS},    \(
		D_2( \mu(a)\mu(b))
		\)  also belongs to \(E(0, \infty)\).
		By the definition of symmetrically  quasi-normed    function spaces, we have 
	\(\mu(axb) \in E(0, \infty)\), that is, \(axb\in E(\cM, \tau)\).		
	\end{proof}

\begin{rem}
    The converse of Lemma \ref{E0EM} fails even for symmetrically quasi-normed function spaces.
    For example, let $a,b\in E(0,\infty)$ be such that $ab=0$. In particular, we have $S_{a,b}=0$, which means that the range $ \mathrm{Ran}(S_{a,b}) \subseteq  E(0,\infty )$. However, $\mu(a)\mu(b)$ is not necessarily in $E(0,\infty )$. 
\end{rem}

The following lemma is certainly known to experts. For a complete proof, we refer to \cite{HUANG}.

\begin{lemma}\label{lemma:decomposition}\cite[Lemma 4.1]{HUANG}
Let $\cM$ be an atomless von Neumann algebra equipped with a semi-finite faithful normal trace $\tau$.
Suppose that $x\in S(\cM,\tau)$  such that $\mu(x)$ is continuous on $(0,\tau(s|x|))$  and let 
 $N \ge 2$ be a natural number. 
There exists  $0 <\cdots<s_i<s_{i+1} < \cdots < \tau(s(|x|))$, 
$-\infty <i<\infty $, 
such that 
 $$ \mu(s_i;x )  \ge \mu(s_{i+1};x )> \frac{N-1}{N}  \mu(s_i;x) . $$
\end{lemma}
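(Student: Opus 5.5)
The plan is a greedy inductive construction in both directions from a base point, with continuity of $\mu(x)$ guaranteeing at each step a nontrivial admissible interval. Write $f=\mu(x)$, $T=\tau(s(|x|))\in(0,\infty]$ and $c=\frac{N-1}{N}\in(0,1)$. Then $f$ is decreasing and continuous on $(0,T)$, and strictly positive there because $t<T$. I will also aim for the (implicitly intended) covering property
\[
\lim_{i\to-\infty}s_i=0,\qquad \lim_{i\to\infty}s_i=T.
\]
Without it the statement is trivial, since one could take a sequence squeezed in a tiny interval.

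\emph{Forward step.} Fix $s_0\in(0,T)$. Given $s_i$, put
\[
t_i=\sup\{t\in(s_i,T): f(t)>c f(s_i)\}.
\]
By continuity of $f$ at $s_i$ and $f(s_i)>c f(s_i)$, this set contains a right neighbourhood of $s_i$, so $t_i>s_i$. Since $f$ is decreasing, every $t\in(s_i,t_i)$ satisfies $f(s_i)\ge f(t)>cf(s_i)$. Choose
\[
s_{i+1}=s_i+\tfrac12\min\bigl(t_i-s_i,\;r_i\bigr),
\]
where $r_i=(T-s_i)/2$ if $T<\infty$ and $r_i=1$ if $T=\infty$. Then $s_{i+1}\in(s_i,t_i)\cap(s_i,T)$ and the required inequality holds.

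\emph{Backward step.} Given $s_i$, let
\[
u_i=\inf\{t\in(0,s_i): f(t)<f(s_i)/c\}.
\]
Continuity at $s_i$ gives $u_i<s_i$. Put $s_{i-1}=s_i-\tfrac12\min(s_i-u_i,\,s_i/2)$. For this choice $f(s_{i-1})\ge f(s_i)>cf(s_{i-1})$.

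\emph{Main obstacle: the limits.} Suppose the forward sequence increases to some $s^*<T$ (respectively stays bounded when $T=\infty$). By continuity, $f(s_i)\to f(s^*)>0$, so for large $i$ we have $cf(s_i)<f(s^*)$. Continuity at $s^*$ then gives a $\delta>0$ with $f>cf(s_i)$ on $[s^*,s^*+\delta]$, which forces $t_i\ge s^*+\delta$. Hence the increments $\tfrac12\min(t_i-s_i,r_i)$ stay bounded below by a positive constant, contradicting convergence. The backward limit is handled symmetrically. If $s_i\downarrow s_*>0$, then $f(s_i)\to f(s_*)$ and $f(s_*)<f(s_i)/c$ for large $i$, so $u_i\le s_*-\delta'$ for some $\delta'>0$. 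The decrements are then bounded below by a positive constant, again a contradiction. Possible unboundedness of $f$ near $0$ is harmless, because only continuity at interior points is used.
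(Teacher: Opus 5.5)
Your proof is correct. The paper gives no argument for this lemma; it only cites \cite[Lemma 4.1]{HUANG}, so there is nothing in the text to compare it with, and your construction stands as a self-contained proof.

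You are right that, read literally, the statement is trivial. The real content is the exhaustion property $s_i\to 0$ as $i\to-\infty$ and $s_i\to\tau(s(|x|))$ as $i\to\infty$. This is what the proof of Lemma~\ref{compact} needs, so that $\sum_j t_{j+1}\chi_{[s_j,s_{j+1})}$ approximates $\mu(a)$ on its whole support.

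Your greedy two-sided construction never prescribes target values of $\mu(x)$. A construction through geometric levels $\mu(s_i;x)\approx\rho^i$ would need separate patches when $\mu(x)$ has flat pieces, or when its range on $(0,\tau(s(|x|)))$ is bounded. The latter happens when $\mu(x)$ is bounded near $0$, when $\mu^x_\infty>0$, or when $\mu(x)$ does not tend to $0$ at $\tau(s(|x|))$. Your construction handles all these cases uniformly, and the only cost is the accumulation argument, which you carry out correctly. The atomless hypothesis plays no role: the statement concerns only the decreasing function $\mu(x)$.

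One point in the limit step should be made explicit: $\delta$ must not depend on $i$. This is automatic because $f(s_i)$ is non-increasing in $i$.
\begin{itemize}
\item Forward direction: fix one $i_0$ with $cf(s_{i_0})<f(s^*)$, and choose $\delta$ with $f>cf(s_{i_0})$ on $[s^*,s^*+\delta]$. Then $f>cf(s_i)$ there for every $i\ge i_0$, so $t_i\ge s^*+\delta$ uniformly.
\item Backward direction: fix $i_0$ with $f(s_*)<f(s_{i_0})/c$, and choose $\delta'$ with $f<f(s_{i_0})/c$ on $[s_*-\delta',s_*]$. Since $f(s_i)/c$ increases as $i\to-\infty$, this gives $u_i\le s_*-\delta'$ for all $i\le i_0$.
\end{itemize}

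Finally, your admissibility condition survives inserting extra points. If $s_i<s'<s_{i+1}$, then $f(s')\ge f(s_{i+1})>cf(s_i)$ and $f(s_{i+1})>cf(s_i)\ge cf(s')$. Hence the union of your sequences for two functions, continuous on a common interval $(0,T)$, is admissible for both. This is the form in which the lemma is invoked, for $a$ and $b$ simultaneously, in the proof of Lemma~\ref{compact}.
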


The following two lemmas are key ingredients in the proof of Theorem \ref{main}. 
The main tool is the total comparability of projections in a factor\cite[Proposition 6.2.6]{KR-II}.

For $x\in S(\cM,\tau)$,
the  non-negative number $\mu_\infty^x$ is defined by \cite[p.~190]{DPS}
$$\mu_\infty^x=\lim\limits_{t\to\infty}\mu(t;x)$$ 
and $t^x$ is defined by 
$$t^x:=\inf\{t>0: \mu(t;x)=\mu_\infty^x\}.$$

\begin{lemma}\label{compact}
	Let $\cM$ be a factor equipped with a semi-finite  faithful normal trace $\tau$, and let $E(0,\infty )$ be a symmetrically quasi-normed function space.
	Assume that  $0 \leq a,b \in \{x \in S(\cM,\tau):\mu^x_{\infty} = 0 \text{ or } t^x = \infty \}$ and  $\mu(axb)\in E(0, \infty)$ for all $x\in\cM$.
    We have
	$\mu(a)\mu(b)\in E(0, \infty)$,  and for  arbitrary $\varepsilon > 0$,  there exists a partial isometry  $w_{_{\varepsilon}} \in \cM$
	 such that  
	\begin{align*}
		(1-\varepsilon)
	\mu(a)\mu(b)
\leq
	\mu	(
 aw_{_{\varepsilon}}
	b
)
\leq
		(1+\varepsilon)
	\mu(a)\mu(b)
	.
	\end{align*}  
Consequently, 
	\begin{align*}
	(1-\varepsilon)
	\norm{\mu(a)\mu(b)	}_{E(0,\infty)}
	\leq
	\norm{ 
	 aw_{\varepsilon}
		b
	}_{E(\cM, \tau )}
		\leq
	(1+\varepsilon)
	\norm{ 	\mu(a)\mu(b)
	}_{E(0,\infty)}
	.
	\end{align*}	
\end{lemma}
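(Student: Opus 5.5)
We will build the partial isometry by hand: use the spectral projections of $a$ and $b$ to cut both operators into dyadic-type blocks on which their singular values are almost constant, and then use comparability of projections in the factor $\cM$ to make $w_\varepsilon$ carry the $j$-th block of $b$ onto the $j$-th block of $a$. Reduction: $a,b\ge0$, and the hypothesis ($\mu_\infty=0$ or $t^x=\infty$) rules out a flat tail of positive height on a finite set. If $\cM$ has atoms it is a type I factor ($B(\cH)$ with the standard trace, up to scaling). Then $\mu(a),\mu(b)$ are step functions, $a=\sum \mu_j(a)p_j$ and $b=\sum\mu_j(b)q_j$ with rank-one $p_j,q_j$. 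Take $w=\sum_j v_j$ with $v_j^*v_j=q_j$ and $v_jv_j^*=p_j$. Then $awb=\sum_j \mu_j(a)\mu_j(b)\, v_j$ with mutually orthogonal ranges and supports, so $\mu(awb)=\mu(a)\mu(b)$ exactly. In the atomless case we may replace $\mu(a)$, $\mu(b)$ by continuous data. Alternatively, we can pass to $\cM\bar\otimes L_\infty(0,1)$. However, $w$ must lie in $\cM$, so it is cleaner to use spectral projections directly, allowing jumps of $\mu$ to be handled by splitting spectral projections, which is possible since $\cM$ is atomless.

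Construction in the atomless case: fix $N$ with $(N-1)/N\ge 1-\varepsilon$ in a suitable sense, and use Lemma~\ref{lemma:decomposition} (after a harmless splitting of jumps) to choose a common partition $\{s_i\}$ of $(0,\min(\tau(s(a)),\tau(s(b))))$. The partition is fine enough that both $\mu(a)$ and $\mu(b)$ vary by factor at most $(N-1)/N$ on each $[s_i,s_{i+1})$; a common refinement of the two partitions works. Since $\cM$ is atomless, choose mutually orthogonal projections $p_i\le e^{a}$ with $\tau(p_i)=s_{i+1}-s_i$ and $a p_i=p_i a$. On $p_i$ the spectrum of $a$ lies in $[\mu(s_{i+1};a),\mu(s_i;a)]$, taken from a spectral decomposition of $a$ realising $\mu(a)$ via a maximal increasing family of projections $e_t$ with $\tau(e_t)=t$. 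Choose $q_i$ for $b$ in the same way. Equal traces in a factor give $p_i\sim q_i$ by total comparability and semifiniteness: two finite projections of equal trace are equivalent. Hence there are partial isometries $v_i$ with $v_i^*v_i=q_i$ and $v_iv_i^*=p_i$, and we set $w_\varepsilon=\sum_i v_i$, a strongly convergent sum of orthogonal pieces.

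Estimate: $aw_\varepsilon b=\sum_i (ap_i)v_i(q_ib)$ is a block-diagonal sum with orthogonal left and right supports. Its modulus $|aw_\varepsilon b|$ is therefore $\bigoplus_i |a p_i v_i q_i b|$. Each block satisfies
\[
\mu(s_{i+1};a)\mu(s_{i+1};b)\,q_i\le |ap_iv_iq_ib|\le \mu(s_i;a)\mu(s_i;b)\,q_i ,
\]
using $c\le ap_i\le C$ on $p_i$ and $v_i^*(ap_i)^2v_i$ between the corresponding scalars. Consequently $\mu(aw_\varepsilon b)$ lies between the step functions $\sum\mu(s_{i+1};a)\mu(s_{i+1};b)\chi_{[s_i,s_{i+1})}$ and $\sum\mu(s_i;a)\mu(s_i;b)\chi_{[s_i,s_{i+1})}$. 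These are within factors $((N-1)/N)^{\pm2}$ of $\mu(a)\mu(b)$, which gives the two-sided bound after renaming $\varepsilon$. When the supports have infinite trace with a flat positive tail ($t^x=\infty$), the same works with $\mu$ eventually constant, since the blocks are still finite. Membership and the norm inequalities then follow from the hypothesis $\mu(aw_\varepsilon b)\in E$ and the symmetry of $E$: $(1-\varepsilon)\mu(a)\mu(b)\le\mu(aw_\varepsilon b)$ gives $\mu(a)\mu(b)\in E$, and monotonicity plus homogeneity gives the norm estimates.

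The main obstacle is the careful choice of the blocks $p_i,q_i$ when $\mu(a)$ or $\mu(b)$ has jumps or plateaus. There we must split spectral projections of $a$ that commute with $a$ and have prescribed trace, which needs atomlessness. A further issue is the range near $t=0$ and near $\tau(s(\cdot))$, where the partition is bi-infinite. This is exactly why the hypothesis excludes a positive flat tail on a set of finite measure: otherwise the top level of $b$ would have finite trace while $\mu$ stays positive, and the matching would fail.
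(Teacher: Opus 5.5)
Your argument follows the same strategy as the paper. You cut $\mu(a)$ and $\mu(b)$ into blocks on which they are nearly constant, and take commuting spectral subprojections $p_i$, $q_i$ of equal finite trace. Since such projections are equivalent in a factor, you assemble $w_\varepsilon=\sum_i v_i$ (the paper's $w_N$ with $w_Ne'_jw_N^*=e_j$) and squeeze $\mu(aw_\varepsilon b)$ between two step functions. Computing $|aw_\varepsilon b|=\sum_i|ap_iv_iq_ib|$ directly from the block structure is a real simplification: it makes the paper's passage through $w^*\widetilde a w\widetilde b$ and its Step~3 unnecessary, because your blocks already lie under $e^a(\mu^a_\infty,\infty)$ and $e^b(\mu^b_\infty,\infty)$. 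Your diagonal construction in the atomic case is fine once you replace $a=\sum_j\mu_j(a)p_j$ by $ap_j=\mu_j(a)p_j$. When $\mu^a_\infty>0$ the operator $a$ need not be diagonalisable, but that relation is all you use.

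The step that fails as written is the lower estimate when $\mu(a)$ or $\mu(b)$ has jumps, which is exactly the case you handle inside the partition instead of reducing to continuous $\mu$ as the paper does. In a partition into intervals $[s_i,s_{i+1})$ as you describe, every jump of ratio exceeding $N/(N-1)$ must sit at some $s_{i+1}$. There $\mu(s_{i+1};\cdot)$ is the value \emph{after} the jump. For example, take $a=2p+q$ with $p\perp q$ projections of trace $1$: the point $1$ is some $s_{i+1}$, and on $[s_i,1)$ your lower function $\mu(1;a)\mu(1;b)=\mu(1;b)$ is at most half of $\mu(a)\mu(b)$.

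The repair is to use left limits. Choose the partition so that $\mu(s_{i+1}-;\cdot)\ge\frac{N-1}{N}\mu(s_i;\cdot)$ for both functions, with all blocks of finite length; the level sets $\{c'<\mu\le c\}$ are intervals $[\gamma,\gamma')$ by right-continuity and serve this purpose. Then note that $p_i=e_{s_{i+1}}-e_{s_i}$ satisfies $\mu(s_{i+1}-;a)p_i\le ap_i\le\mu(s_i;a)p_i$. This holds because $\tau(e_t)=t$ forces $e_{s_{i+1}}=\bigvee_{t<s_{i+1}}e_t\le e^a[\mu(s_{i+1}-;a),\infty)$. With this the rest of your argument goes through unchanged, and Lemma~\ref{lemma:decomposition}, which needs continuity, is not needed at all.

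One further correction: $t^x=\infty$ means $\mu(x)$ never attains its limit, so it is not eventually constant. What you actually need is that $e^x(\mu^x_\infty,\infty)$ is the supremum of the finite-trace projections $e^x(\mu^x_\infty+1/k,\infty)$, so all blocks can be taken of finite trace.
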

\begin{proof}
	    If $x\in S(\cM, \tau)$ with $\mu^x_{\infty} = 0$, then \begin{align}\label{mua2}
		xe^x(\mu^x_{\infty}, \infty)=	xe^x(0, \infty)=x.
	    \end{align}	
	    If $x\in S(\cM, \tau)$ with $t^x=\infty$, 
		then by the definition of $t^x$, we have 
		$\tau\left(e^x (\mu_\infty^x ,\infty )\right) =t^x=\infty$ (see e.g.\cite[Eq.(3.46)]{DPS}).
		It follows from \cite[Proposition 3.2.10 (iii)]{DPS}
		that 
		\begin{align}\label{mua1}
			\mu(xe^x(\mu^x_{\infty}, \infty))=\mu(x)\chi_{[0, \infty)}=\mu(x).
		\end{align} 
        
         We only consider the case when  $\cM$ is atomless.  The case for atomic factors can be
         proved similarly (see also  the proofs of  \cite[Lemmas 5.3 and 5.4]{F84}).  
       Without loss of generality, we assume that $0<\tau(s(a))\leq \tau(s(b))$.
       Firstly, we consider the case when  $\mu(a),\mu(b)$ are continuous on $(0,\infty )$.

	   {\bf Step 1}.
We first consider  operators 
          $ae^a(\mu_\infty^a,\infty )$ and $be^b(\mu^b_{\infty}, \infty)$.
 For  simplicity, we denote 	$$0\leq\widetilde{a}:=ae^a(\mu_\infty^a,\infty )=e^a(\mu_\infty^a,\infty )ae^a(\mu_\infty^a,\infty ),$$ $$0\leq\widetilde{b}:=be^b(\mu_\infty^b,\infty )=e^b(\mu_\infty^b,\infty )be^b(\mu_\infty^b,\infty ).$$ 
	   Let $N \ge 2$ be a fixed natural number.	By Lemma \ref{lemma:decomposition}, we obtain  $0 <\cdots<s_j<s_{j+1} < \cdots < \tau(s(b))$, $-\infty <j<\infty $, such that 
	   \[
	   \begin{cases}
	   \mu(s_j;a)   \ge \mu(s_{j+1};a)> \frac{N-1}{N}  \mu(s_j;a),& s_j<\tau(s(a)), \\
	   \mu(s_j;a)=0, & s_j\geq \tau(s(a))	,
	   \end{cases}
	   \]
	   and
	   \[
	   \mu(s_j;b)  \ge \mu(s_{j+1};b)> \frac{N-1}{N}  \mu(s_j;b) .
	   \]
	   Let $t_j:=\mu(s_j;a)$  and
	   $t'_j:=\mu(s_j;b)$.  
	   Then, we have
	   \begin{align}\label{lambdaj}
	\nonumber	t_j \geq t_{j+1} > \frac{N-1}{N}t_j, \quad 0<s_j<\tau(s(a)),&\\ 
        t'_j \geq t'_{j+1} > \frac{N-1}{N}t'_j, \quad  0<s_j<\tau(s(b)).
	   \end{align}

	Since $\cM$ is atomless,
	it follows from
	\cite[Lemma 3.7.7(i)]{DPS} (or \cite[Lemma 3.7.8]{DPS})
	that for  $\{t_j\}_{-\infty<j<\infty}$ and
	$\{t'_j\}_{-\infty<j<\infty}$, 
	there exist two sequences $\left\{e_j \right\}_{-\infty<j<\infty}$,
	$\left\{e'_j \right\}_{-\infty<j<\infty}$ of projections in 
	$\cM$ such that 
	$$  \cdots\leq e_j\leq e_{j+1}  \cdots \mbox{ and }
	\tau(e_j) =s_j, ~\forall -\infty<j<\infty,
	$$
	$$  \cdots\leq e'_j\leq e'_{j+1}  \cdots \mbox{ and }
	\tau(e'_j) =s_j, ~\forall -\infty<j<\infty,
	$$
	$$ e^{a}(t_j,\infty)\leq e_j \leq 
	e^{a}[t_j,\infty )\quad \mbox{
		and  }\quad
	e^{b}(t'_j,\infty)\le e_j' \le 
	e^{b}[t'_j,\infty ).$$

Since $\cM$ is a factor, 
it follows from \cite[Corollary 6.2.6]{KR-II} that  there exists a  partial isometry  $w_{_{N}} \in \cM $  such that  
\begin{align}\label{equal1}
	w_{_{N}} e'_j w_{_{N}} ^*=e_j, ~\forall -\infty<j<\infty. \end{align}
	Observe that,  we have
	\begin{align} \label{ineqa}
		\begin{split}
			\sum_{-\infty <j<\infty } t_{j+1}  (e_{j+1}-e_j) \stackrel{\eqref{lambdaj}}{\leq} \widetilde{a} 
			&  \stackrel{\eqref{lambdaj}}\leq   \sum_{-\infty <j<\infty  } t_{j }  (e_{j+1}-e_j) 
			\\ &
			\stackrel{\eqref{lambdaj}}{
			<}\frac{N}{N-1} \Big( \sum_{-\infty <j<\infty  } t_{j+1 }  (e_{j+1}-e_j) \Big )
		\end{split}
	\end{align} 
	and 
	\begin{equation}\label{be}
		\begin{split}
		\sum_{-\infty <j<\infty } t'_{j+1}  (e'_{j+1}-e'_j) \stackrel{\eqref{lambdaj}}{\leq} 
	    \widetilde{b}
		&\stackrel{\eqref{lambdaj}}{\leq}\sum_{-\infty <j<\infty } t'_{j}  (e'_{j+1}-e'_j)
		\\ &
		 \stackrel{\eqref{lambdaj}}{<}\frac{N}{N-1} \Big(
		 \sum_{-\infty <j<\infty  } t'_{j+1 }  (e'_{j+1}-e'_j)
		  \Big ).
		\end{split}
	\end{equation}
By \cite[Example 3.2.2]{DPS}, we obtain that 	
	\begin{align} \label{1.3}
	\sum_{-\infty <j<\infty  } t_{j+1 }\chi_{[ s_{j}, s_{j+1} )} 
	\stackrel{\eqref{ineqa}}{\leq}\mu\left(\widetilde{a}
	\right) 
	\stackrel{\eqref{ineqa}}{<}\frac{N}{N-1} \sum_{-\infty <j<\infty  } t_{j+1 }\chi_{[ s_{j}, s_{j+1} )},
    \end{align}
    and 
    \begin{align} \label{1.4}
		\sum_{-\infty <j<\infty  } t'_{j+1 }\chi_{[  s_{j}, s_{j+1} )}
		\stackrel{\eqref{be}}{\leq}	\mu(\widetilde{b})
		\stackrel{\eqref{be}}{<}\frac{N}{N-1} \sum_{-\infty <j<\infty } t'_{j+1 }\chi_{[  s_{j}, s_{j+1} )}.
    \end{align}
Hence,	 we have
\begin{align} \label{1.5}
	\begin{split}		
		\sum_{-\infty <j<\infty  }t_{j+1 } t'_{j+1 }\chi_{[  s_{j}, s_{j+1} )}
		& \stackrel{\eqref{1.3},\eqref{1.4}}{
			\leq}
		\mu(\widetilde{a})\mu(\widetilde{b})\stackrel{\eqref{mua2},\eqref{mua1}}{=}\mu(a)\mu(b)
		\\	&
		\stackrel{\eqref{1.3}, \eqref{1.4}}{<}
		\left(\frac{N}{N-1}\right)^2 \sum_{-\infty <j<\infty  }t_{j+1 } t'_{j+1 }\chi_{[  s_{j}, s_{j+1})}.
	\end{split}
\end{align}

		{\bf Step 2}. 
Observe that, we have
\begin{eqnarray}\label{mu(u*au)}	
		 	\sum_{-\infty <j<\infty  } t_{j+1}  (e'_{j+1}-e'_j) &	\stackrel{\eqref{equal1}}{=} &	w_{_{N}} ^*   \Big(\sum_{-\infty <j<\infty } t_{j+1}  (e_{j+1}-e_j)\Big) w_{_{N}} \nonumber\\ 
        &  \stackrel{\eqref{ineqa}}{\le} & 
        w_{_{N}} ^*   
	\widetilde{a}		
			w_{_{N}}   	\\		&\stackrel{\eqref{equal1},\eqref{ineqa}}{<}&
            \frac{N}{N-1} 
			\Big( \sum_{-\infty <j<\infty  } t_{j+1 }  (e'_{j+1}-e'_j) \Big ).  	\nonumber
\end{eqnarray}	
By \cite[Lemma 3.7.10]{DPS}, we have
\[
	   \begin{cases}
	   ae_j=e_ja,& s_j<\tau(s(a)), \\
	   ae_j=as(a)=e_ja, & s_j\geq \tau(s(a)),	
	   \end{cases}
	   \]
and $be'_{j}=e'_{j}b$ for all $j$, which together with the equality \eqref{equal1} yield that
\begin{align*}
	ae_j=e_j a \stackrel{\eqref{equal1}}{\Longleftrightarrow } aw_{_{N}} e'_j w_{_{N}} ^*= w_{_{N}} e'_j w_{_{N}} ^* a \Longleftrightarrow w_{_{N}}^*aw_{_{N}}e_{j}'=e_{j}'w_{_{N}}^*aw_{_{N}}. 
\end{align*}
By \cite[Proposition 2.2.24 (iii), (iv)]{DPS}, we have 
\begin{eqnarray*}
& &		\sum_{-\infty <j<\infty } (t'_{j+1})^{2}  (t_{j+1})^{2} 
	(e'_{j+1}-e'_j) \\
	& \stackrel{\eqref{mu(u*au)}}{	\leq} &
    \sum_{-\infty <j<\infty } (t'_{j+1})^{2} \Big(w_{_{N}} ^*   \widetilde{a}
	w_{_{N}} (e'_{j+1}-e'_j) \Big)^{2}
	\\ &  =&  w_{_{N}} ^*   \widetilde{a}
	w_{_{N}}
    \left( 
    \sum_{-\infty <j<\infty } (t'_{j+1})^{2} (e'_{j+1}-e'_j)  
    \right)
    w_{_{N}} ^*   \widetilde{a}w_{_{N}}
	\\ &
	\stackrel{\eqref{be}}{	\leq}&
	w_{_{N}} ^*   \widetilde{a}
	w_{_{N}} 	\widetilde{b}^2
	w_{_{N}} ^*   \widetilde{a}
		 w_{_{N}} 
	\\ &\stackrel{\eqref{mu(u*au)}, \eqref{be}}{	<}&
	\left(\frac{N}{N-1}\right)^4	\sum_{-\infty <j<\infty } (t'_{j+1})^{2} (t_{j+1})^{2} 
	(e'_{j+1}-e'_j) .
\end{eqnarray*}
By the above inequality  and \cite[Example 3.2.2]{DPS}, we have
\begin{eqnarray*}
& &
\sum_{-\infty <j<\infty } (t'_{j+1})^{2} (t_{j+1})^{2} 
\chi_{[ s_{j}, s_{j+1} )}\\
&  \le& 
\mu(w_{_{N}} ^*   \widetilde{a}
w_{_{N}} 	\widetilde{b}^2 
w_{_{N}} ^*   \widetilde{a}
w_{_{N}} ) \\
&=&\mu\Big( (w_{_{N}}^*   \widetilde{a}w_{_{N}} \widetilde{b}) (w_{_{N}} ^* \widetilde{a}w_{_{N}} \widetilde{b})^*\Big)
\\ &  \stackrel{\text{\cite[Prop. 3.2.10]{DPS}}}{=}&
\mu\Big( (w_{_{N}} ^* \widetilde{a}w_{_{N}} \widetilde{b})^*(w_{_{N}}^* \widetilde{a}w_{_{N}} \widetilde{b}) \Big)
\\ &  \stackrel{\text{\cite[Prop. 3.2.8]{DPS}}}{=}&
\mu(w_{_{N}} ^*   \widetilde{a}
w_{_{N}} \widetilde{b})^2 
\\ & <&
\left(\frac{N}{N-1}\right)^4	\sum_{-\infty <j<\infty } (t'_{j+1})^{2} (t_{j+1})^{2} 
\chi_{[ s_{j}, s_{j+1} )},
\end{eqnarray*}
which implies that 
\begin{align} \label{1.5.5}
	\sum_{-\infty <j<\infty } t'_{j+1} t_{j+1}
	\chi_{[ s_{j}, s_{j+1} )}
	  \le 
	\mu(w_{_{N}} ^*   \widetilde{a}w_{_{N}} \widetilde{b})
	 <
	\left(\frac{N}{N-1}\right)^2	\sum_{-\infty <j<\infty } t'_{j+1} t_{j+1}
	\chi_{[ s_{j}, s_{j+1} )}.
\end{align}
Combining inequalities \eqref{1.5.5} and \eqref{1.5}, we have
	\begin{align*} 
		\left(\frac{N-1}{N}\right)^2\mu(a)\mu(b)
\le 
		 \mu\Big(
	w_{_{N}} ^*   \widetilde{a}
	w_{_{N}} \widetilde{b}
		\Big)
\le 
		\left(	\frac{N}{N-1}
		\right)^2 \mu(a)\mu(b)	
		.
	\end{align*}
	
 For an  arbitrary $\varepsilon>0$. Letting $N$ be large enough and setting $w_{_{\varepsilon}}:=w_{_{N}}$, we obtain a partial isometry $w_{_{\varepsilon}}$ such that
	\begin{align}\label{ww*w}
	w_{_{\varepsilon}} w_{_{\varepsilon}}^*=\bigcup_{-\infty<j<\infty} e_j, \quad
	w_{_{\varepsilon}}^* w_{_{\varepsilon}}=e^b(\mu^b_{\infty}, \infty),	
	\end{align}
	and 
	\begin{align}\label{aeabeb}
	(1-\varepsilon)
	\mu(a)\mu(b)
    \leq
	\mu	(
	w_{_{\varepsilon}}^* \widetilde{a}
	w_{_{\varepsilon}}
	\widetilde{b})
    \leq (1+\varepsilon)
	\mu(a)\mu(b).
	\end{align}

{\bf{Step 3}}.		
	Since $w_{_{\varepsilon}}$ is a partial isometry
with initial projection $e^b(\mu^b_{\infty}, \infty)$ and final projection 
$\bigcup\limits_{-\infty<j<\infty} e_j$, it follows from the equality \eqref{ww*w} that
\begin{align*}  	aw_{_{\varepsilon}}b
	=a w_{_{\varepsilon}}
    \widetilde{b}
	=a\Bigg(e^a(\mu^a_{\infty}, \infty)+\Big(\bigcup_{-\infty<j<\infty} e_j-e^a(\mu^a_{\infty}, \infty)\Big)\Bigg)w_{_{\varepsilon}}\widetilde{b},
\end{align*}
and
\[
	   \begin{cases}
	   \displaystyle\bigcup_{-\infty<j<\infty} e_j-e^a(\mu^a_{\infty}, \infty)=0,& t^a =\infty, \\
	   a\Big(\displaystyle\bigcup_{-\infty<j<\infty} e_j-e^a(\mu^a_{\infty}, \infty)\Big)=0, & \mu^a_{\infty}=0.	
	   \end{cases}
	   \]
Hence, we have
\begin{align}\label{1.6}
aw_{_{\varepsilon}}b=a
e^a(\mu^a_{\infty}, \infty)w_{_{\varepsilon}}\widetilde{b}
	=\widetilde{a}w_{_{\varepsilon}}\widetilde{b}. 
    \end{align}
   In particular,  \begin{align} \label{0.5}w_{_{\varepsilon}}^*aw_{_{\varepsilon}}b=w_{_{\varepsilon}}^*\widetilde{a}w_{_{\varepsilon}}\widetilde{b}. 
    \end{align}
Moreover, by \cite[Proposition 3.2.7(vi)]{DPS}, we have
\begin{align}\label{1.7}
    \mu(w_{_{\varepsilon}}^*\widetilde{a} w_{_{\varepsilon}}
	\widetilde{b}) 
	\leq \mu(\widetilde{a} w_{_{\varepsilon}}
	\widetilde{b})
	=\mu(w_{_{\varepsilon}}w_{_{\varepsilon}}^*\widetilde{a} w_{_{\varepsilon}}
	\widetilde{b})
	\leq \mu(w_{_{\varepsilon}}^*\widetilde{a} w_{_{\varepsilon}}
	\widetilde{b}).
\end{align}
Hence,
\begin{align}\label{1.8}
	\mu	(w_{_{\varepsilon}}^* a w_{{\varepsilon}}b)\stackrel{\eqref{0.5}}{=}
    \mu	(w_{_{\varepsilon}}^* \widetilde{a}w_{{\varepsilon}}\widetilde{b})
    \stackrel{\eqref{1.7}}{=}
	\mu(\widetilde{a} w_{_{\varepsilon}}
	\widetilde{b})
	\stackrel{\eqref{1.6}}{=}\mu( aw_{_{\varepsilon}} b
	).
\end{align}
Thus, 
for  arbitrary $ \varepsilon>0 $,  there exists a partial isometry  $w_{_{\varepsilon}} \in \cM$
such that  
		\begin{align*}
		(1-\varepsilon)
		\mu(a)\mu(b)
		\stackrel{\eqref{aeabeb}}{\le} \mu	(
	w_{_{\varepsilon}}^* a
	w_{_{\varepsilon}}
	b )
         \stackrel{\eqref{1.8}}{=}
		\mu(  aw_{_{\varepsilon}}b)          \stackrel{\eqref{aeabeb}}{\leq}
		(1+\varepsilon)
		\mu(a)\mu(b)
		.
	\end{align*}		
 By the  assumption that $\mu(axb)\in E(0, \infty)$ for all $x\in\cM$, we have 
 $\mu(  aw_{_{\varepsilon}}b) \in E(0,\infty)$.

If $\mu(a),\mu(b)$ are continuous, then, by the definition of a symmetric function space,  we have 
$\mu(a)\mu(b)\in E(0, \infty)$
and 
		\begin{align*}
			(1-\varepsilon)
		\norm{\mu(a)\mu(b)	}_{E(0,\infty)}
		\leq
	\norm{ 
	 aw_{_{\varepsilon}}
		b
	}_{E(\cM, \tau )}
	\leq
		(1+\varepsilon)
	\norm{ 	\mu(a)\mu(b)
	}_{E(0,\infty)}
		.
	\end{align*}
Now, consider 
 the general case when  $\mu(a)$ and $\mu(b)$ are not necessarily continuous. 
There exists a commutative atomless von Neumann subalgebra $\cA_1$ of $\cM_{s(a)}$ such that $a\in A_1$ and $S(\cA_1,\tau)$ is (trace-measure preserving) $*$-isomorphic to $S (0,\tau(s(a)))$  (see e.g. \cite{CS,CKS}).
Let $\{I_i\}_{i=1}^\infty  $ be  a sequence of mutually disjoint intervals such that both $\mu(a)$ and $\mu(b)$ are continuous on $I_i$. 
Let $p_{I_i}$ be the projection in $\cA_1$ corresponding to $\chi_{I_i}$. Similarly, we define projections $q_{I_i}$'s for the operator $b$. 
Using the result obtained above, there exists a partial isometry $w_i$ such that $w_i^* w_i=q_{I_i}$ and $w_i w_i^* = p_{I_i}$ and 
	\begin{align*}
		(1-\varepsilon)
		\mu(a)\mu(b)\chi_{I_i}
		 \le 
		\mu(  aw_{i} b)          \stackrel{\eqref{aeabeb}}{\leq}
		(1+\varepsilon)
		\mu(a)\mu(b)\chi_{I_i}
		.
	\end{align*}	
The proof is complete by taking   $w_\varepsilon = \sum_{i=1}^\infty w_i $, where
the series is considered  in the  strong operator 
topology.  This completes the proof. 
\end{proof}

\begin{lemma}\label{2infnity}
	Let $\cM$ be a   factor equipped with a semi-finite  faithful normal trace $\tau$,  
     and let $E(0,\infty )$ be a symmetrically quasi-normed function space.
	Let 
 $0\leq b\in S(\cM,\tau)$ and $0 \leq a \in \{x \in S(\cM,\tau):\mu^x_{\infty} > 0 \text{ and } t^x < \infty \}$. 
	If $\mu(axb)\in E(0, \infty)$ for all $x\in\cM$,
 then
 $\mu(a)\mu(b)\in E(0, \infty)$,
  and
  	for arbitrary $\varepsilon>0$, there exists a partial isometry  $w_{_{\varepsilon}} \in \cM$ such that
  	 \begin{align*}
		(1-\varepsilon)
		\mu(a)\mu(b)
		\leq
		\mu(a w_{_{\varepsilon}}b)
		\leq			(1+\varepsilon) \mu(a)\mu(b).		\end{align*}
Consequently,  
	\begin{align*}
		(1-\varepsilon)
		\norm{
			\mu(a)\mu(b)
		}_{E(0, \infty)}
		\leq	\norm{ 
		a	w_{_{\varepsilon}} b
		}_{E(\cM, \tau )}
		\leq
		(1+\varepsilon)
		\norm{ \mu(a)\mu(b)
		}_{E(0,\infty)}.		
	\end{align*}		
\end{lemma}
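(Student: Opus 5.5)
Since $\mu^a_\infty>0$ and $t^a<\infty$, we have $\mu(t;a)=\mu^a_\infty=:\alpha>0$ for all $t\ge t^a$. Put $p:=e^a(\alpha,\infty)$, which has trace $t^a<\infty$. Then $a\ge \alpha(\mathbf 1-p)$ on $\mathbf 1-p$, and $a$ commutes with $p$. We have $\tau(e^a[\alpha,\infty))=\infty$. If $\tau(e^a\{\alpha\})=\infty$, the operator $a$ contains an infinite "flat" part at level $\alpha$. In general, $\tau(e^a[\alpha,\alpha+\delta))=\infty$ for every $\delta>0$, because $\mu(a)$ equals $\alpha$ beyond $t^a$ and $\mu(a)\ge\alpha$. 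The plan is to split $a=ap+a(\mathbf 1-p)$. The first summand is handled like Lemma \ref{compact}, since $ap$ has finite support, so $\mu^{ap}_\infty=0$. The second summand behaves essentially like the scalar $\alpha$ on a projection of infinite trace.

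\textbf{Step 1 (upper bound and membership).} The upper bound $\mu(awb)\le\mu(a)\mu(b)$ up to dilation is not sharp. The sharp upper estimate is obtained a posteriori from the construction, exactly as in Lemma \ref{compact}, via a two-sided estimate $\mu(awb)\approx \mu(a)\mu(b)$. Membership $\mu(a)\mu(b)\in E$ then follows from the assumption $\mu(aw_\varepsilon b)\in E$ together with the lower bound and symmetry of $E$.

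\textbf{Step 2 (construction).} Let $\tilde b=be^b(\mu^b_\infty,\infty)$ if $\mu^b_\infty=0$ or $t^b=\infty$. Otherwise $b$ is also of the present type, and we use $\tau(e^b[\mu^b_\infty,\infty))=\infty$. Choose an increasing family of projections $e'_j$ adapted to $b$, with $\tau(e'_j)=s_j$, as in Step 1 of Lemma \ref{compact}, where the $s_j$ run up to $\infty$ (or to $\tau(s(b))$). On the $a$-side, choose projections $e_j$ commuting with $a$ with $e^a(t_j,\infty)\le e_j\le e^a[t_j,\infty)$ for $s_j\le t^a$. For $s_j>t^a$, let the $e_j$ be increasing subprojections of $e^a[\alpha,\alpha+\delta)$, enlarged by $p$, with $\tau(e_j)=s_j$. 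This is possible because that spectral projection has infinite trace in an atomless factor. On the range of $e_{j+1}-e_j$ the operator $a$ lies between $\alpha$ and $\alpha+\delta$, so the two-sided estimate \eqref{ineqa} holds with ratio $\max\{N/(N-1),(\alpha+\delta)/\alpha\}$. Total comparability (\cite[Corollary 6.2.6]{KR-II}) gives a partial isometry $w$ with $we'_jw^*=e_j$. Steps 2–3 of Lemma \ref{compact} then apply verbatim. Since $w^*aw$ commutes with the $e'_j$ and $\tilde b$ is diagonal in the same chain, we get $\mu(w^*aw\tilde b)\approx\mu(a)\mu(b)$ up to the factor $(1\pm\varepsilon)$ after choosing $N$ large and $\delta$ small. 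We also need $\mu(awb)=\mu(w^*awb)$; this follows as in \eqref{1.7}, because $ww^*$ commutes with $a$. The atomic case and the case of discontinuous $\mu(b)$ are handled by the same reductions as in Lemma \ref{compact}, decomposing into intervals of continuity and summing the partial isometries strongly.

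\textbf{Main obstacle.} The delicate point is that $w^*w$ must dominate the support of $\tilde b$, which may have infinite trace, while $ww^*$ must commute with $a$ and carry $a$ into a nearly constant multiple of the identity. This is where $t^a<\infty$ and $\mu^a_\infty>0$ are used: they guarantee that the spectral projection $e^a[\alpha,\alpha+\delta)$ has infinite trace. Consequently the chain $e_j$ can be continued to arbitrary trace while keeping $a$ pinched between $\alpha$ and $\alpha+\delta$ on each increment. If instead $a$ has a gap (for example $e^a\{\alpha\}$ infinite with nothing nearby), the estimate becomes exact on that part. I would first verify the estimate carefully for $s_j$ crossing $t^a$, since there the two choices of $e_j$ must be glued into a monotone chain.
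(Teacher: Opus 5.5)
\textbf{There is a genuine gap, in the step you yourself flag as the key one.} You assert $\tau(e^a[\alpha,\infty))=\infty$ and $\tau(e^a[\alpha,\alpha+\delta))=\infty$ for every $\delta>0$. You then continue the chain $e_j$ beyond trace $t^a$ inside $e^a[\alpha,\alpha+\delta)$. This is false in general.

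Since $\tau(e^a(\alpha,\infty))=t^a<\infty$, we have $\tau(e^a[\alpha,\alpha+\delta))=\tau(e^a\{\alpha\})+\tau(e^a(\alpha,\alpha+\delta))$, and the second term is at most $t^a$. So this trace is infinite only when $\alpha$ is an eigenvalue of $a$ of infinite multiplicity. What $\mu(t;a)\ge\alpha$ for all $t$ actually gives is $\tau(e^a(s,\infty))=\infty$ for every $s<\alpha$. The infinite-trace mass near level $\alpha$ therefore sits in $e^a(\alpha-\delta,\alpha]$, at or below $\alpha$. Correspondingly, $a(\mathbf{1}-p)\le\alpha(\mathbf{1}-p)$, not $\ge$.

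Concretely, in a II$_\infty$ factor let $a$ correspond to the function $\alpha(1-e^{-t})$ in a diffuse copy of $L_\infty(0,\infty)$ on which $\tau$ is Lebesgue measure. Then $\mu(a)\equiv\alpha$, so $\mu^a_\infty=\alpha>0$ and $t^a=0$. Yet $e^a[\alpha,\infty)=0$, so your chain cannot even be started. The same error recurs on the $b$-side when you use $\tau(e^b[\mu^b_\infty,\infty))=\infty$.

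\textbf{The fix, and what the paper does.} The correct projection is $e_2=e^a(\alpha-\delta,\alpha]$. It has infinite trace and satisfies $(\alpha-\delta)e_2\le ae_2\le\alpha e_2$; this is the projection the paper uses. The paper also avoids building any chain in the flat part:
\begin{itemize}
\item It pairs $e_1=e^a(\alpha,\infty)$ with a projection $e_1'$ of trace $t^a$, commuting with $b$, lying between $e^b(\mu(t^a;b),\infty)$ and $e^b[\mu(t^a;b),\infty)$. It then applies Lemma \ref{compact} to $ae_1$ and $be_1'$.
\item It pairs $e_2$ with $e_2'=\mathbf{1}-e_1'$ through an arbitrary partial isometry $g$ with $g^*e_2g=e_2'$.
\end{itemize}
Because $X=g^*ae_2g=e_2'Xe_2'$ is pinched between $(\alpha-\delta)e_2'$ and $\alpha e_2'$, and $be_2'$ is self-adjoint, one gets $(\alpha-\delta)^2(be_2')^2\le be_2'X^2be_2'\le\alpha^2(be_2')^2$. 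Hence $\mu(ae_2gbe_2')=\mu(Xbe_2')$ lies between $(\alpha-\delta)\mu(be_2')$ and $\alpha\,\mu(be_2')$, whatever $b$ looks like on $e_2'$. Setting $w_\varepsilon=h_\varepsilon+g$ then finishes the proof.

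If you want to keep your single-chain scheme, replacing $e^a[\alpha,\alpha+\delta)$ by $e^a(\alpha-\delta,\alpha]$ is not enough. You must also choose the flat-part increments to commute with $a$; arbitrary subprojections of a spectral projection need not. This is needed because Steps 2--3 of Lemma \ref{compact} use that $w^*aw$ commutes with the $e_j'$ and that $ww^*$ commutes with $a$. The paper's two-block decomposition makes both this and a separate treatment of a flat part of $b$ unnecessary.
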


\begin{proof}
We  assume that $\cM$ is atomless. The case when  is atomic can be  proved similarly. 	Arguing similarly as the proof of Lemma \ref{compact}, we may assume that $\mu(a) ,\mu(b)$ are continuous on their supports. 
Without loss of generality, we assume that 
$t^a \leq t^b$.

	{\bf{Step 1}}
	By the definition of $t^a$, we have 
	$\tau\left(e^a (\mu_\infty^a ,\infty )\right) =t^a$ (see e.g.\cite[(3.46)]{DPS}).
	Denote $e_1:=e^a (\mu_\infty^a ,\infty )$.
	
	The assumption that $\cM$ is atomless allows us to
select a projection 
  $e_1'$ in $\cM $ such that 
	$$e^{b} \Big(\mu(t^a ; b ), \infty  \Big)
	\le 
	e_1'
	\le e^{b}\Big[\mu(t^a ; b  ), \infty \Big),
	$$
 $\tau(e_1')=t^a $  and $b	e_1'=	e_1'b$ \cite[Lemmas 3.7.7 and 3.7.10]{DPS}.

	We  now consider positive  operators $ae_1$ and $be_1'$. 
	By Lemma \ref{compact}, 
	we obtain  that 
   for arbitrary $\varepsilon>0$, 
	 there exists a partial isometry  $h_{_{\varepsilon}}\in \cM$   such that
	$h_{_{\varepsilon}} h_{_{\varepsilon}}^*=e_1$, 
	 	$h_{_{\varepsilon}}^* h_{_{\varepsilon}}=e_1'$,
	 	and 
		\begin{align}\label{partineq1}
				(1-\varepsilon)
			\mu(ae_1)\mu(	be_1')
			\leq	\mu	(
		 ae_1
			h_{_{\varepsilon}}
			be_1'
			)
	\leq
		(1+\varepsilon)
			\mu(ae_1)\mu(	be_1').
	\end{align}

	{\bf{Step 2}}
	For a fixed $0<\delta<\mu_{\infty}^a$, by the definition of $\mu_{\infty}^a$, we have $\tau(e^{a}
   (\mu_\infty^a-\delta,\infty ))=\infty$,
which implies that there exists 
a projection $e_2:=e^{a}
   (\mu_\infty^a-\delta,\infty )-e_1$ in $\cM$ such that   $\tau(e_2)=\infty$ and
\begin{align}\label{aineq2}
    0<
    (\mu^a_{\infty}-\delta)    e_2
    \leq
    ae_2    
    \leq 
    \mu^a_{\infty}e_2.
\end{align}
    Denote $e_2'={\bf 1}-e_1'$.
We have   
 $\tau(e_2')=\infty$ 
 and $b e_2'=e_2'b$.
 We consider operators $ae_2$ and $be_2'$. 
    Since $\cM$ is a factor, it follows from \cite[Corollary 6.2.6]{KR-II} that  there exists a  partial isometry  $g_{_{\delta}} \in \cM $ such that  $ g_{_{\delta}} ^* e_2 g_{_{\delta}} = e_2'$.     
    In particular, we have 
    \begin{align} \label{ineqa2}
            (\mu^a_{\infty}-\delta) e_2'   &
        \,\,    =g_{_{\delta}} ^*\left(
        (\mu^a_{\infty}-\delta)
        e_2\right) g_{_{\delta}} \\ &\nonumber
            \stackrel{\eqref{aineq2}}{\leq}
            g_{_{\delta}} ^* (ae_2)g_{_{\delta}} 
        \\ &\nonumber   \stackrel{\eqref{aineq2}}{\leq}
            g_{_{\delta}} ^* (\mu^a_{\infty}e_2)g_{_{\delta}} 
        =\mu^a_{\infty} e_2'.
    \end{align} 
    It is readily verified 
    that $g_{_{\delta}} ^* ae_2g_{_{\delta}}$ is a self-adjoint operator and commutes with 
    $e_2'$ and $(b  e_2')^*=e_2'b = be_2'$ (see e.g. Step 3 in the proof of Lemma \ref{compact}). By \cite[Proposition 2.2.24 (iii), (iv)]{DPS}, we have
	\begin{align*}
        (\mu^a_{\infty}-\delta)^2 (b e_2')^2
        \stackrel{  \eqref{ineqa2} }{ \leq } 
        b e_2'( g_{_{\delta}} ^* ae_2g_{_{\delta}})^2 b  e_2'
        \stackrel{  \eqref{ineqa2} }{ \leq }    
        (\mu^a_{\infty})^2 (    b   e_2')^2.    
\end{align*}
     By \cite[Propositions 3.2.8 and 3.2.10(i)]{DPS}, it follows that 
     \begin{align}\label{part2}
        (\mu^a_{\infty}-\delta) 
        \mu( b  e_2')
    \leq    \mu\left(   g_{_{\delta}} ^* ae_2g_{_{\delta}}b   e_2'
        \right)  \leq  \mu^a_{\infty}   
        \mu( b  e_2').  
     \end{align}
    Combining \eqref{part2} with 
    \begin{align*}
    (\mu^a_{\infty}-\delta) 
        \mu( b  e_2')
        = \frac{\mu_{\infty}^a-\delta}{\mu_{\infty}^a} \mu(\mu_\infty^a e_2) \mu(be_2') 
         \stackrel{ \eqref{aineq2} }{ \geq  } 
        \frac{\mu_{\infty}^a-\delta}{\mu_{\infty}^a} \mu ( ae_2 )
        \mu(be_2'),
        \\
        \mu^a_{\infty} \mu( b   e_2')
         = \frac{\mu_{\infty}^a}{\mu_{\infty}^a-\delta}    \mu((\mu_\infty^a -\delta )e_2)
        \mu(be_2')
         \stackrel{ \eqref{aineq2} }{ \leq  } 
        \frac{\mu_{\infty}^a}{\mu_{\infty}^a-\delta}    \mu (  ae_2 )
        \mu(be_2'),
        \end{align*}
    we have
	\begin{align*}
		\frac{\mu_{\infty}^a-\delta}{\mu_{\infty}^a} \mu ( ae_2 )
		\mu(be_2')
		\leq
		\mu( g_{_{\delta}} ^* ae_2g_{_{\delta}}be_2')			
		\leq
        \frac{\mu_{\infty}^a}{\mu_{\infty}^a-\delta}	 \mu (  ae_2 )
		\mu(be_2'). 
	\end{align*}
Since $\delta$ is arbitrarily taken, it follows that for any $\varepsilon>0$, 
there exists a partial isometry $g^\varepsilon=g_{_\delta} $ such that 
$g^\varepsilon (g^\varepsilon)^*=e_2$, 
$(g^\varepsilon)^* g^\varepsilon =e_2'$,
and
\begin{align}\label{part2ineq1}		(1-\varepsilon)
	\mu(ae_2
)	
	\mu(be_2')
	\leq
	\mu( (g^\varepsilon) ^* ae_2 g^\varepsilon 
	b	e_2') 
	\leq
	(1+\varepsilon)	
	\mu\left(ae_2
	\right)	
	\mu(be_2').
\end{align}
As in  Step 3 in the proof of Lemma \ref{compact}, we have 
$	\mu( (g^\varepsilon) ^* ae_2 g^\varepsilon 
	b	e_2')
    =\mu( ae_2 g^\varepsilon 
	b	e_2')	. $

	{\bf{Step 3 }}	
Observe that, 
\begin{align}\label{muequal}	\mu(a)\mu(b)= \mu\Big( \mu(a)\mu(b) \Big)
	=
	\mu\Big( \mu(ae_1)\mu(be_1')\oplus\mu(ae_2)\mu(be_2')\Big)
.
\end{align}	
		Therefore, 
for arbitrary $\varepsilon >0 $,  we obtain a partial isometry  $	w_{_{\varepsilon}}:=h_{_{\varepsilon}}+g^{\varepsilon} \in \cM$ such that  
$w_{_{\varepsilon}} w_{_{\varepsilon}}^*=e_1+e_2$, 
$w_{_{\varepsilon}}^* w_{_{\varepsilon}}= \bf{1} $,
and 
\begin{eqnarray*}
		(1-\varepsilon) 
		\mu(a)\mu(b)
		&
	 \stackrel{\eqref{muequal}}{=}	& 			(1-\varepsilon) 
		\mu\Big( \mu(ae_1)\mu(be_1')
		\oplus
		\mu(ae_2)\mu(be_2')
		\Big)
		\\
		&
          \stackrel{\eqref{partineq1}, \eqref{part2ineq1}}{\leq}&
		\mu\Big( \mu\left(
        ae_1h_{_{\varepsilon}}
		be_1'\right)\oplus\mu\left(
        ae_2g^\varepsilon  be_2'\right)\Big)
		\\
		& =&
        \mu\Big(
        ae_1h_{_{\varepsilon}} be_1'\oplus
        ae_2g^\varepsilon  be_2'
		\Big)
        	\\
		& =& 
        \mu(a w_{_{\varepsilon}}b)
			\\& 
	\stackrel{\eqref{partineq1}, \eqref{part2ineq1}}
		{\leq} 	&   (1+\varepsilon)  \mu\Big(  \mu(ae_1)\mu(	be_1')  \oplus \mu\left(ae_2
	\right)	
	\mu(be_2')  \Big)	\\
        & \stackrel{\tiny \mbox{\eqref{muequal}
        }}{=} &
        (1+\varepsilon)  \mu(a)\mu(b).			
\end{eqnarray*}
Since $
\mu(a w_{_{\varepsilon}}b)
\in E(0,\infty)$, it follows from   the definition  of $E(0,\infty)$  that  
	$\mu(a)\mu(b)\in E(0, \infty)$ and 
\begin{align*}
	(1-\varepsilon)
	\norm{\mu(a)\mu(b)}_{E(0, \infty)}
	\leq	\norm{ 
	a	w_{_{\varepsilon}} b
    }_{E(\cM, \tau )}
    \leq
	(1+\varepsilon)
	\norm{ \mu(a)\mu(b)
	}_{E(0,\infty)}.		
\end{align*}	
\end{proof}

The following lemma extends \cite[Lemma 5.3 and 5.4]{F84}. 
\begin{lemma}\label{EOEM2}
    Let $\cM$ be a  von Neumann algebra   equipped with a semi-finite  faithful normal trace~$\tau$ and  $a, b\in S(\cM,\tau)$.   
	The range of $S_{a,b}$ is contained in  the symmetrically quasi-normed  operator space   $E(\cM, \tau)$ if and only if $\mu(a)\mu(b)\in E(0, \infty)$.  Moreover,   we have 
	\begin{align*}
 		\norm{S_{a,b}}_{\cM\to E(\cM,\tau)} \geq
 		\norm{   \mu(a )\mu(  b) }_{E (0,\infty)   }.
 	\end{align*}
\end{lemma}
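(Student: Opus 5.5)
The plan is to reduce to positive operators and then combine Lemma~\ref{E0EM} with Lemmas~\ref{compact} and~\ref{2infnity}. (The statement says ``von Neumann algebra'', but those two lemmas are proved for factors. I will work in the factor setting of Theorems~\ref{main} and~\ref{main2}, since total comparability of projections is what makes the construction of $w_\varepsilon$ work.)

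\textbf{Reduction to $a,b\ge 0$.} Take polar decompositions $a=u|a|$ and $b^*=v|b^*|$, so $b=|b^*|v^*$. Then
\[
axb=u\,|a|\,x\,|b^*|\,v^* .
\]
Since $u,v^*$ are contractions, $\mu(axb)\le \mu(|a|x|b^*|)$. Conversely,
\[
|a|x|b^*| = u^*(axb)v \quad \text{whenever } x=s(|a|)\,x\,s(|b^*|),
\]
and such compressed elements $x$ suffice below, because the $w_\varepsilon$ produced in Lemmas~\ref{compact} and~\ref{2infnity} can be taken with initial projection under $s(|b^*|)$ and final projection under $s(|a|)$. We also have $\mu(a)=\mu(|a|)$ and $\mu(b)=\mu(b^*)=\mu(|b^*|)$. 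Hence both the range condition and the norm are unchanged when $(a,b)$ is replaced by $(|a|,|b^*|)$, and for $x$ with $\|x\|_{\cM}\le1$ the values $\|axb\|_E$ and $\|\,|a|x|b^*|\,\|_E$ range over comparable sets.

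\textbf{The ``if'' direction.} This is exactly Lemma~\ref{E0EM}.

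\textbf{The ``only if'' direction and the norm bound.} Assume $a,b\ge0$ and $\mathrm{Ran}(S_{a,b})\subseteq E(\cM,\tau)$. Split into cases according to the classes in Lemmas~\ref{compact} and~\ref{2infnity}.
\begin{itemize}
\item If both $a,b$ satisfy $\mu_\infty=0$ or $t=\infty$, apply Lemma~\ref{compact}.
\item If $a$ satisfies $\mu^a_\infty>0$ and $t^a<\infty$, apply Lemma~\ref{2infnity} directly.
\item If only $b$ is of that type, apply Lemma~\ref{2infnity} to the adjoint pair. Indeed $(axb)^*=bx^*a$, $\mu(y^*)=\mu(y)$, and $x\mapsto x^*$ preserves the unit ball of $\cM$, so the hypothesis holds for $S_{b,a}$. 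The resulting $w$ gives $\mu(a w^* b)=\mu(b w a)$.
\end{itemize}
In every case we get $\mu(a)\mu(b)\in E(0,\infty)$ and, for each $\varepsilon>0$, a partial isometry $w_\varepsilon$ (so $\|w_\varepsilon\|_{\cM}\le1$) with
\[
\|S_{a,b}\|\ \ge\ \|aw_\varepsilon b\|_{E(\cM,\tau)}\ \ge\ (1-\varepsilon)\,\|\mu(a)\mu(b)\|_{E}.
\]
Letting $\varepsilon\to0$ gives the claimed lower bound.

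\textbf{Main obstacle.} The delicate step is the bookkeeping in the reduction to positive operators. One must check that the partial isometries from Lemmas~\ref{compact} and~\ref{2infnity} can be chosen supported under $s(|b^*|)$ and $s(|a|)$, which their construction from spectral projections of $|a|$ and $|b^*|$ provides. One must also check that the case split exhausts all positive $a,b$; it does, since each of $a,b$ lies in exactly one of the two classes.
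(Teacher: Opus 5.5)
Your proposal is correct and is essentially the paper's proof (polar-decomposition reduction to $a,b\ge0$, Lemma~\ref{E0EM} for sufficiency, Lemmas~\ref{compact} and~\ref{2infnity} for necessity and the norm bound). You also usefully make explicit two points the paper glosses over. First, the factor hypothesis is genuinely needed: the Remark after Lemma~\ref{E0EM} is a commutative counterexample to the statement as written. Second, the case where only $b$ has $\mu^b_\infty>0$, $t^b<\infty$ is handled by passing to adjoints.

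Your ``main obstacle'' is illusory, however. Since $u^*u|a|=|a|$ and $|b^*|v^*v=|b^*|$, the identity $u^*(axb)v=|a|x|b^*|$ holds for every $x\in\cM$. Hence $\mu(axb)=\mu(|a|x|b^*|)$ outright, and no support condition on $w_\varepsilon$ is required. This is just as well, since the $w_\varepsilon$ built in Lemma~\ref{2infnity} has initial projection $\mathbf{1}$, so your claim about its support would not hold literally without compressing.
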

\begin{proof}
	Let  $a, b\in S(\cM,\tau)$ with polar decompositions   $a=u|a|$ and $b=v|b|$. 
    We have $uu^*=r(a)$, $b=|b^*|v$,  and $v^*v=s(b)$\cite[p.~18--19]{DPS}.  By \cite[Proposition 3.2.7(vi)]{DPS},   we have
	\begin{align*}
		\mu(axb) = \mu(u|a|x|b^*|v)\leq \mu(|a|x|b^*|) 
		=\mu(u^*axbv^*) \leq \mu(axb),\quad \forall x\in\cM,
	\end{align*}
	which implies $\mu(axb)=\mu(|a|x|b^*|)$. Hence, it suffices to consider the case when $a,b\ge 0$. 

	By Lemma~\ref{E0EM}, it suffices to prove the necessity.
	By Lemmas~\ref{compact} and \ref{2infnity},   for an  arbitrary $ \varepsilon >0 $,  there exists a partial isometry  $w_{_{\varepsilon}} \in \cM$ such that  
    \begin{align*}
	(1-\varepsilon)
	\mu(a)\mu(b)
	\leq
\mu(aw_{_{\varepsilon}}b)
.
\end{align*}  
    Since $\mu(aw_{_{\varepsilon}}b)\in E(0,\infty)$, it follows from the definition of $E(0,\infty)$ that $\mu(a)\mu(b)\in E(0, \infty)$ and
	\begin{align*}
		(1-\varepsilon)
		\norm{ \mu(a)\mu(b) }_{E(0, \infty)}
		\leq
		\norm{ aw_{_{\varepsilon}}b }_{E(\cM, \tau )}
		.
	\end{align*}
    Thus, by the definition of the norm of $S_{a,b}$, we have
	\begin{align*}
		\norm{S_{a,b} }_{\cM\to E(\cM,\tau)}=\sup_{\scriptscriptstyle \norm{x}_{_{\cM}}=1} 
		\norm{axb }_{E(\cM,\tau)}
		\geq
		\norm{  \mu(a)\mu(b) }_{E(0, \infty)  }.
	\end{align*}
    This completes the proof. 
\end{proof}

Before proceeding to the proof of Theorems \ref{main} and \ref{main2}, we recall the following  results. 
	
\begin{theorem}\cite[Theorem 7]{Sukochev16}\label{Thm7inS16}
If $a, b\in S(\cM,\tau)$, then $ab\vartriangleleft\mu(a)\mu(b)$.
\end{theorem}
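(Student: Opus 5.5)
The plan is to derive the uniform submajorisation from the classical (Horn-type) weak submajorisation by a cut-off argument. I take $y\vartriangleleft x$ in the standard sense: there is $\lambda\in\mathbb N$ with $\int_{\lambda s}^{t}\mu(y)\le\int_s^t\mu(x)$ for all $\lambda s\le t$. The goal is to show this holds for $y=ab$, $x=\mu(a)\mu(b)$ with $\lambda=2$.

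\textbf{Step 1: the global inequality.} I will use, as an ingredient, the semifinite version of Horn's inequality:
\begin{align*}
\int_0^u \mu(s;xy)\,ds\le\int_0^u\mu(s;x)\mu(s;y)\,ds,\qquad u>0,\ x,y\in S(\cM,\tau).
\end{align*}
This follows from the logarithmic submajorisation $xy\prec\prec_{\log}\mu(x)\mu(y)$ (Fack--Kosaki; see also \cite{DDSZ}) together with the standard fact that $\prec\prec_{\log}$ implies ordinary submajorisation, since $t\mapsto e^t$ is convex and increasing. Justifying this for unbounded $\tau$-measurable $x,y$ is where I expect the main technical obstacle. I would first treat bounded operators with finite-trace support, where determinant arguments are available, and then pass to general operators by truncation, using normality of $\mu$ under monotone spectral truncations.

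\textbf{Step 2: cutting off the large parts.} Fix $r>0$. Since $\cM$ can be taken atomless (otherwise tensor with $L_\infty(0,1)$), I choose by \cite[Lemma 3.7.7]{DPS} a projection $e$ commuting with $|a|$, with $\tau(e)=r$ and $e^{|a|}(\mu(r;a),\infty)\le e\le e^{|a|}[\mu(r;a),\infty)$. I choose $f$ analogously for $|b^*|$. Using the polar decompositions, write
\begin{align*}
a=a_1+a_2,\quad a_1=u|a|e,\ a_2=u|a|(\mathbf 1-e);\qquad b=b_1+b_2,\quad b_1=f|b^*|v,\ b_2=(\mathbf 1-f)|b^*|v.
\end{align*}
By construction $\mu(t;a_2)=\mu(t+r;a)$ and $\mu(t;b_2)=\mu(t+r;b)$. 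Then
\begin{align*}
ab=a_2b_2+(a_1b+a_2b_1).
\end{align*}
Here $a_1b$ has left support of trace at most $r$, and $a_2b_1$ has right support of trace at most $r$. Hence $a_1b+a_2b_1$ has rank (trace of support) at most $2r$. By \cite[Proposition 3.2.7]{DPS} this gives
\begin{align*}
\mu(t+2r;ab)\le\mu(t;a_2b_2),\qquad t>0.
\end{align*}

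\textbf{Step 3: conclusion.} For $2r\le T$, combining Step 2 with Step 1 gives
\begin{align*}
\int_{2r}^{T}\mu(ab)=\int_0^{T-2r}\mu(t+2r;ab)\,dt
\le\int_0^{T-2r}\mu(a_2b_2)
\le\int_0^{T-2r}\mu(t+r;a)\mu(t+r;b)\,dt
=\int_r^{T-r}\mu(a)\mu(b)
\le\int_r^{T}\mu(a)\mu(b).
\end{align*}
Since $r>0$ was arbitrary, this is exactly $ab\vartriangleleft\mu(a)\mu(b)$ with $\lambda=2$.

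Apart from the Horn-type inequality in Step 1, the remaining points are routine. One must check the identity $\mu(a_2)=\mu(a)(\cdot+r)$, which follows from the choice of $e$ inside the spectral projections (\cite[Proposition 3.2.10]{DPS}). One must also check the rank estimate for the remainder $a_1b+a_2b_1$ in Step 2.
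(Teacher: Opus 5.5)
The paper does not prove this statement: it is quoted from \cite{Sukochev16} and used only for the upper bound in Theorem~\ref{main}. Your argument is therefore a self-contained alternative, and in substance it is correct. Your reading of $\vartriangleleft$ is the right one; the displayed definition in Section~\ref{s2} has $x$ and $y$ interchanged.

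The splitting $ab=a_2b_2+a_1b+a_2b_1$ is sound. So is the observation that $a_1b$ has left support, and $a_2b_1$ right support, of trace at most $r$. This gives $\mu(r;a_1b)=\mu(r;a_2b_1)=0$, hence $\mu(t+2r;ab)\le\mu(t;a_2b_2)$ by \cite[Proposition 3.2.7]{DPS}. The chain in Step~3 is also sound, and the whole argument yields $\lambda=2$ explicitly. Compared with simply citing \cite{Sukochev16}, your route makes it transparent that uniform submajorisation is just Horn-type submajorisation plus a perturbation of support trace $2r$, with no factor assumption involved.

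Two points need tightening.

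\textbf{The choice of $e$.} A projection $e$ with $\tau(e)=r$ and $e^{|a|}(\mu(r;a),\infty)\le e\le e^{|a|}[\mu(r;a),\infty)$ need not exist, even in an atomless algebra. When $\mu(r;a)=\mu^a_\infty$ one can have $\tau(e^{|a|}[\mu^a_\infty,\infty))<r$. For example, take $|a|$ to be multiplication by $t/(1+t)$ on $L_2(0,\infty)$: then $\mu(a)\equiv 1$ while $e^{|a|}[1,\infty)=0$. This is harmless, because Steps~2 and~3 only use $\tau(e)\le r$ and the inequality $\mu(t;a_2)\le\mu(t+r;a)$, not the identity. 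In the degenerate case take $e=e^{|a|}(\mu(r;a),\infty)$. Then $\tau(e)\le r$ and $\norm{|a|(\mathbf 1-e)}_{\cM}\le\mu^a_\infty\le\mu(t+r;a)$. Do the same for $b$.

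\textbf{Step~1.} The logarithmic submajorisation (Theorem~\ref{Weyllog}) is only available on $L_{\log_+}(\cM,\tau)$, since for general $x\in S(\cM,\tau)$ the integral $\int_0^t\log\mu(s;x)\,ds$ can be $+\infty$. Moreover, $\mu(x_ny_n)$ is not monotone in $n$, so ``normality under monotone truncation'' is not the right mechanism. You have two clean options:
\begin{itemize}
\item cite Fack--Kosaki's Theorem~4.2 with $f(t)=t$, which holds for all $\tau$-measurable operators; or
\item reuse your own Step~2 trick, as follows.
\end{itemize}
Set $x_n=xe^{|x|}[0,n]$ and $y_n=ye^{|y|}[0,n]$; these are bounded, so Theorem~\ref{Weyllog} applies to them. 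Write $xy-x_ny_n=(x-x_n)y+x_n(y-y_n)$. The first term has left support of trace at most $\tau(e^{|x|}(n,\infty))$, and the second has right support of trace at most $\tau(e^{|y|}(n,\infty))$. Let $\varepsilon_n$ be the sum of these two traces; $\tau$-measurability gives $\varepsilon_n\to0$. Then $\int_{\varepsilon_n}^u\mu(xy)\le\int_0^u\mu(x_ny_n)\le\int_0^u\mu(x)\mu(y)$, and monotone convergence finishes.
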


\begin{theorem} \cite[Theorem 4.2]{DDSZ}\label{Weyllog}
If \( a, b \in L_{\log_+}(\cM, \tau) \), then   \( ab \prec\prec_{\log} \mu(a)\mu(b) \).
\end{theorem}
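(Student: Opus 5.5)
The plan is to prove the inequality first for bounded operators, using a Fuglede--Kadison-type determinant on finite-trace corners, and then pass to $L_{\log_+}(\cM,\tau)$ by truncation and a monotone limit. This is the semifinite analogue of Horn's inequality $\prod_{j\le k}\mu_j(ab)\le\prod_{j\le k}\mu_j(a)\mu_j(b)$.

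For $x\in S(\cM,\tau)$ and $t>0$ with $t\le\tau({\bf 1})$ write
$$\Delta_t(x):=\exp\int_0^t\log\mu(s;x)\,ds\in[0,\infty).$$
The claim $ab\prec\prec_{\log}\mu(a)\mu(b)$ is exactly the submultiplicativity $\Delta_t(ab)\le\Delta_t(a)\Delta_t(b)$ for every $t>0$.

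\textbf{Reductions.} Using the polar decompositions $a=u|a|$ and $b=|b^*|v$, as in the proof of Lemma~\ref{EOEM2}, we have $\mu(ab)=\mu(|a|\,|b^*|)$, $\mu(a)=\mu(|a|)$ and $\mu(b)=\mu(|b^*|)$. So we may assume $a,b\ge 0$.

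Next, replace $a$ by $a_n:=a\,e^a[1/n,n]+\tfrac1n e^a[0,1/n)$, and similarly for $b$. These are bounded, invertible-on-support perturbations. The key requirement is that $\int_0^t\log\mu(a_nb_n)\to\int_0^t\log\mu(ab)$ and $\int_0^t\log\mu(a_n)\to\int_0^t\log\mu(a)$.
\begin{itemize}
\item The hypothesis $a,b\in L_{\log_+}$ is exactly what makes $\log_+\mu(a)$ integrable on $(0,t)$. This lets us use dominated convergence for the positive parts.
\item For the negative parts we use monotone convergence, or upper semicontinuity combined with Fatou's lemma.
\item If $\int_0^t\log\mu(a)=-\infty$, the right-hand side is $-\infty$ only when the left-hand side is also forced to be $-\infty$. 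This is handled by the standard convention and the fact that $\mu(s;ab)\le\mu(s/2;a)\mu(s/2;b)$.
\end{itemize}

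\textbf{Determinant formula on corners.} For bounded $x$ and $t$ at which no jump issue occurs (reduce to atomless $\cM$ via $\cM\bar\otimes L_\infty(0,1)$ if necessary), I will establish
$$\Delta_t(x)=\sup\Big\{\Delta_{e}\big(|x e|\big):e\in P(\cM),\ \tau(e)=t\Big\},$$
where $\Delta_e$ is the Fuglede--Kadison determinant of the finite algebra $e\cM e$ with normalised trace, raised to the power $t$. The supremum is attained at a spectral projection of $|x|$ with $\tau(e)=t$. The inequality ``$\ge$'' follows from the Fack--Kosaki min-max description of $\mu$, applied inside $e\cM e$: $\mu(s;|xe|)\le\mu(s;x)$.

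Given this, take $e$ to be a spectral projection of $|ab|$ with $\tau(e)=t$. Let $f$ be the support of $|be|$, so $\tau(f)\le t$; enlarge $f$ to a projection with $\tau(f)=t$ inside the range projection side. Then
$$|abe|=|a f\,(f b e)|,$$
where $fbe$ maps $e\cM e$-vectors to $f$. Multiplicativity of the Fuglede--Kadison determinant, transported between the corners $e$ and $f$ by a partial isometry of trace $t$, gives
$$\Delta_e(|abe|)=\Delta(|af|)\,\Delta(|fbe|)\le\Delta_t(a)\,\Delta_t(b).$$
For this step I will use total comparability, exactly as in Lemma~\ref{compact}. The final inequality is the determinant formula again.

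\textbf{Main obstacle.} I expect the hardest step to be the identification $\Delta_e(|xe|)\le\Delta_t(x)$, together with the transport of determinants between the corners $e\cM e$ and $f\cM f$. This requires care when $|be|$ has a kernel in $e$; there $\Delta=0$ and the inequality is trivial. It also requires care at the atomic or jump points of $\mu$, which is why I would pass to the atomless amplification first.

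My first attempt at this step would go through the Fack--Kosaki formula
$$\int_0^t\log\mu(s;x)\,ds=\lim_{r\downarrow0}\frac tr\log\Big(\frac1t\int_0^t\mu(s;x)^r ds\Big).$$
The idea is to reduce to the finite algebra $e\cM e$, where Fuglede--Kadison multiplicativity is standard.
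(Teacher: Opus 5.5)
The paper does not prove this statement; it quotes it from \cite{DDSZ}, so I judge your argument on its own. Several parts are sound: the reduction to $a,b\ge0$, the reformulation as $\Delta_t(ab)\le\Delta_t(a)\Delta_t(b)$, and the core idea of taking $e$ with $\tau(e)=t$ under the spectral projections of $|ab|$, so that $\Delta_t(ab)$ is the Fuglede--Kadison determinant of $|abe|$ in the finite algebra $e\cM e$. The step you flag as hardest, $\Delta_e(|xe|)\le\Delta_t(x)$, is immediate from $\mu(xe)\le\mu(x)$.

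\textbf{Corner transport.} This step fails as written. You appeal to total comparability as in Lemma~\ref{compact}, but the statement is for an arbitrary semifinite $\cM$. Moreover, even for $\cM=B(\cH)$, your own passage to the atomless algebra $\cM\bar\otimes L_\infty(0,1)$ destroys factoriality. There, projections of equal finite trace need not be equivalent (e.g.\ $p\otimes\chi_{(0,1/2)}$ and $p\otimes\chi_{(1/2,1)}$ with $p$ of rank one), so enlarging $f$ to trace $t$ does not give $f\sim e$. (Also, the support of $|be|$ is $r(be)\le e$; you mean $l(be)$.) No second corner is needed. Write $be=v|be|$ with $v^*v=r(be)\le e$; then
\[
|abe|^2=|be|\,v^*a^*av\,|be|, \qquad\text{so}\qquad |abe|=\bigl|\,|av|\,|be|\,\bigr|,
\]
with $|av|,|be|\in e\cM e$. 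Multiplicativity of the determinant in $e\cM e$ (valid for all bounded elements, with $\Delta$ allowed to vanish), together with $\mu(av)\le\mu(a)$ and $\mu(be)\le\mu(b)$, gives $\Delta_t(ab)\le\Delta_t(a)\Delta_t(b)$ for bounded $a,b$.

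\textbf{Passage to $L_{\log_+}(\cM,\tau)$.} This is the second genuine gap. As defined, your $a_n$ vanishes on $e^a(n,\infty)$. So if $\tau(\mathbf 1)=t<\infty$ and $a$ is unbounded, then $\int_0^t\log\mu(a_n)=-\infty$ for every $n$, and the approximating inequalities are vacuous. More fundamentally, convergence in measure $a_nb_n\to ab$ plus domination of the $\log_+$ parts only yields $\limsup_n\int_0^t\log\mu(a_nb_n)\le\int_0^t\log\mu(ab)$, which is the wrong direction for the left-hand side. The lower semicontinuity you need fails in general: in $L_\infty(0,1)$, $x_n=e^{-n^2}\chi_{(0,1/n)}+\chi_{[1/n,1)}\to\mathbf 1$ in measure while $\int_0^1\log\mu(x_n)=-n$. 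Since $\mu(a_nb_n)$ is not monotone in $n$, neither Fatou nor monotone convergence rescues it.

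What works is a one-sided comparison with a shift. For $a,b\ge0$ put $p_n=e^a[0,n]$, $q_n=e^b[0,n]$ and $\delta_n=\tau(\mathbf 1-p_n)+\tau(\mathbf 1-q_n)\to0$. The difference $ab-ap_nbq_n=a(\mathbf 1-p_n)b+ap_nb(\mathbf 1-q_n)$ is a sum of an operator with left support of trace $\le\tau(\mathbf 1-p_n)$ and one with right support of trace $\le\tau(\mathbf 1-q_n)$. Hence $\mu(s+\delta_n;ab)\le\mu(s;ap_nbq_n)$. The bounded case at $t-\delta_n$, applied to $ap_n$ and $bq_n$, together with $\mu(ap_n)\le\mu(a)$ and $\mu(bq_n)\le\mu(b)$, gives
\[
\int_{\delta_n}^{t}\log\mu(s;ab)\,ds\le\int_0^{t-\delta_n}\log\mu(s;a)\,ds+\int_0^{t-\delta_n}\log\mu(s;b)\,ds .
\]
Now let $n\to\infty$. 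Use dominated convergence for the $\log_+$ parts; this is exactly where $a,b\in L_{\log_+}$, and hence $ab\in L_{\log_+}$, is used. Use monotone convergence for the $\log_-$ parts. This yields the claim, including the case where the right-hand side is $-\infty$.

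Your separate treatment of that $-\infty$ case is not valid. The bound $\mu(s;ab)\le\mu(s/2;a)\mu(s/2;b)$ only gives $\int_0^t\log\mu(ab)\le 2\int_0^{t/2}\log(\mu(a)\mu(b))$. That right-hand side is finite whenever $\mu(a),\mu(b)>0$ on $(0,t/2]$, even if $\int_0^t\log\mu(a)=-\infty$.
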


We are now ready to prove the main results of this paper.
\begin{proof}[Proof of Theorem \ref{main}]
By  Lemma \ref{EOEM2}, it suffices to prove that
\begin{align*}
	\norm{S_{a,b}}_{\cM\to E(\cM,\tau)}\leq
	\norm{  \mu(a)\mu(b) }_{E(0, \infty)  }
\end{align*}
when $a, b\in S(\cM,\tau)$ and $\mu(a)\mu(b)$ belongs to $E(0, \infty)$.
For any  $x\in\cM$ satisfying $\norm{x}_{_\cM}=1$, by Theorem \ref{Thm7inS16} and \cite[Proposition 3.2.7 (iv)]{DPS},   we have
\begin{align*}
	\mu(axb)
	&\vartriangleleft
	\mu(ax)\mu(b)
	\leq
	\mu(a)\mu(b).
\end{align*}
Thus, by the definition of the norm of $S_{a,b}$ and 
the monotonicity of a symmetric norm with respect to the uniform submajorisation  \cite[Corollary 3.4.3]{LSZ}, we have
\begin{align*}
	\norm{S_{a,b} }_{\cM\to E(\cM,\tau)} 
	&=\sup_{\scriptscriptstyle \norm{x}_{_\cM=1}} \norm{axb }_{E(\cM,\tau)}
	\\
	&= \sup_{\scriptscriptstyle \norm{x}_{_\cM}=1} 
	\norm{\mu(axb) }_{E(0,\infty)}
	\leq
	\norm{  \mu(a)\mu(b) }_{E(0, \infty)  }.
	\end{align*}
This completes the proof.
\end{proof}

\begin{proof}[Proof of Theorem \ref{main2}]
A complete proof follows from Theorem~\ref{EOEM2} and   the same argument as that in the proof of Theorem \ref{main} with Theorem~\ref{Thm7inS16} replaced by Theorem~\ref{Weyllog}.
\end{proof}

Recall  that every geometrically stable operator ideal is closed with respect to the logarithmic submajorisation\cite{SZ18}. 
Below, we show that every symmetric   quasi-norm has an equivalent symmetric quasi-norm which is monotone (in particular, closed) with respect to the logarithmic submajorisation, which extends \cite[Proposition 2]{Fack}.

\begin{proposition}\label{log}
	Let $(E(0,\infty),\left\|\cdot\right\|_E)$ be a symmetrically quasi-normed function space. Then there exists a quasi-norm on $E(0,\infty)$ equivalent to $\left\|\cdot\right\|_E$ and monotone (and close) with respect to the logarithmic submajorisation.
\end{proposition}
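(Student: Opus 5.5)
The plan is to define the new quasi-norm by taking an infimum over all functions that logarithmically submajorise $x$:
\[
\|x\|_{\log}:=\inf\Big\{\|y\|_E:\ y\in E(0,\infty),\ x\prec\prec_{\log} y\Big\},\qquad x\in E(0,\infty).
\]
This functional is automatically monotone with respect to $\prec\prec_{\log}$ because $\prec\prec_{\log}$ is transitive. It also satisfies $\|x\|_{\log}\le\|x\|_E$, since $x\prec\prec_{\log}x$. Symmetry and homogeneity are immediate: $\prec\prec_{\log}$ only sees $\mu(x)$, and $\log\mu(s;\alpha x)=\log|\alpha|+\log\mu(s;x)$. The substantive points are that $\|\cdot\|_{\log}$ dominates a multiple of $\|\cdot\|_E$, and that it satisfies a quasi-triangle inequality. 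If the infimum itself fails the triangle inequality, I will replace it by the standard Aoki--Rolewicz-type regularisation $\inf\{\sum_k\|x_k\|_{\log}^r\}^{1/r}$ over decompositions of functions logarithmically majorised by $x$. I would rather prove the triangle inequality directly from the equivalence, though.

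\textbf{Key step (lower bound).} I will show there is a constant $c>0$ such that $x\prec\prec_{\log}y$ implies $\|x\|_E\le c\|y\|_E$. Once this holds, $\|\cdot\|_{\log}\ge c^{-1}\|\cdot\|_E$, which gives equivalence. The quasi-triangle inequality for $\|\cdot\|_{\log}$ then follows with constant $cC_E$, since $\|x+y\|_{\log}\le\|x+y\|_E\le C_E(\|x\|_E+\|y\|_E)\le cC_E(\|x\|_{\log}+\|y\|_{\log})$. Positivity also follows.

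To prove the key estimate I will use that $E\subseteq L_q+L_\infty$ for some $q>0$, with the Aoki--Rolewicz $r$-convexification available. Then I pass to powers: $x\prec\prec_{\log}y$ implies $\mu(x)^r\prec\prec\mu(y)^r$ (ordinary submajorisation) for every $r>0$. This is the classical fact that $\phi(t)=e^{rt}$ is convex and increasing, applied to $\log\mu$. The aim is then to use the $r$-convexification $E^{(1/r)}=\{f:|f|^{1/r}\in E\}$. For small $r$ (chosen from the Aoki--Rolewicz exponent so that $E^{(1/r)}$ is, up to equivalence, a normed symmetric space), that space should be monotone under $\prec\prec$ up to a constant. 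That would give $\|\mu(x)^r\|_{E^{(1/r)}}\lesssim\|\mu(y)^r\|_{E^{(1/r)}}$, hence $\|x\|_E\lesssim\|y\|_E$.

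\textbf{Expected obstacle.} The hard part is the previous step. A symmetric normed space need not be monotone under plain submajorisation $\prec\prec$; that is only guaranteed for fully symmetric spaces, and in general one only has monotonicity under uniform submajorisation $\vartriangleleft$. My first attempt will be to exploit the slack that the logarithm gives. If $\log\mu(x)$ is submajorised by $\log\mu(y)$ on every initial segment, then, using the dilation $D_2$ (bounded on every symmetric quasi-normed space by \cite[Lemma 6.1.1]{DPS}), I expect to show pointwise-type control such as $\mu(t;x)\le\big(\prod$ of averages of $\mu(y)$ over $[t/2,t]\big)$. Equivalently, I aim for $\mu(t;x)^{r}\le \frac{2}{t}\int_{t/2}^{t}\mu(s;y)^r\,ds$, or a uniform-submajorisation-type bound $\mu(x)^r\vartriangleleft \mu(y)^r$ up to a dilation. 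From there, \cite[Corollary 3.4.3]{LSZ} in the normed $r$-convexified space, together with boundedness of $D_2$, closes the argument. If this does not go through, the fallback is to restrict to geometrically stable ideals and invoke the closure result of \cite{SZ18} together with the closed graph theorem.
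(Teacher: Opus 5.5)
\textbf{There is a genuine gap: the key estimate is not proved, and the route you sketch for it does not work.}

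Your infimum functional $\|\cdot\|_{\log}$ is a legitimate device. It is monotone by transitivity, and, as you say, everything reduces to the key estimate: $x\prec\prec_{\log}y\in E$ should force $\|x\|_E\le c\|y\|_E$. You also need $x\in E$ there, since the paper's monotonicity and closedness are asked for $x\in L_{\log_+}$, not only for $x\in E$. The proposal does not prove this estimate, for three reasons.

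First, the averaged pointwise bound $\mu(t;x)^r\le\frac2t\int_{t/2}^t\mu(s;y)^r\,ds$ is false. Take $0<\delta<\frac14$, $M>m>0$, $\mu(y)=M\chi_{(0,\delta)}+m\chi_{[\delta,1)}$ and $\mu(x)=M^{\delta}m^{1-\delta}\chi_{(0,1)}$. Then $\int_0^t\log\mu(s;y)\,ds-\int_0^t\log\mu(s;x)\,ds$ equals $t(1-\delta)\log(M/m)\ge0$ for $t\le\delta$ and $\delta(1-t)\log(M/m)\ge0$ for $\delta\le t\le1$, so $x\prec\prec_{\log}y$. Yet for $t\in(2\delta,1)$ the left side is $(M/m)^{r\delta}m^r$ and the right side is $m^r$. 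For any fixed constant and fixed dilation $D_k$, taking $\delta<1/(2k)$ and $M\to\infty$ still breaks the bound. Logarithmic submajorisation controls only geometric means over initial intervals $(0,t)$, not local averages.

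Second, the Aoki--Rolewicz exponent gives $r$-subadditivity $\|u+v\|^r\le\|u\|^r+\|v\|^r$. Normability of $E^{(1/r)}$ needs lattice $r$-convexity, $\|(|u|^r+|v|^r)^{1/r}\|^r\lesssim\|u\|^r+\|v\|^r$. Since $(|u|^r+|v|^r)^{1/r}\ge|u|+|v|$ for $r\le1$, the first does not imply the second.

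Third, the uniform-submajorisation variant is stated without any mechanism. Your fallback of restricting to geometrically stable spaces does not prove the proposition in its stated generality.

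The missing idea is exactly what you put in the fallback: every symmetric space is geometrically stable, with a norm bound. Set $G_y(t):=\exp\bigl(\frac1t\int_0^t\log\mu(s;y)\,ds\bigr)=\varLambda_t(y)^{1/t}$. This is a decreasing function with $\mu(y)\le G_y$, and $x\prec\prec_{\log}y$ gives $\mu(t;x)\le G_x(t)\le G_y(t)$ pointwise. So it suffices to show $G_y\in E$ with $\|G_y\|_E\le C\|y\|_E$.

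The paper proves this in four steps.
\begin{itemize}
\item It passes to an $r$-norm and to the completion.
\item It uses $\|D_{2^k}\|\le 2^{k/r}$ to sum $f_y=\sum_{k\ge0}2^{-\theta k}D_{2^k}\mu(y)$ for $\theta>1/r$, with $\|f_y\|_E\le(1-2^{1-r\theta})^{-1/r}\|y\|_E$.
\item It notes that $\mu(s;y)\le(2t/s)^{\theta}f_y(t)$ for $0<s\le t$.
\item Integrating the logarithm gives $G_y(t)\le e^{\theta(1+\ln 2)}f_y(t)$.
\end{itemize}
The paper then takes $\|T\|_{E_1}:=\|G_T\|_E$ directly, which is monotone by the pointwise comparison of geometric means. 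With this same estimate your infimum construction would also go through: $x\in E$ follows from $\mu(x)\le G_y$, and $\|x\|_E\le\|G_y\|_E\le C\|y\|_E$.
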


Before proving Proposition~\ref{log}, we establish the following result, which unifies \cite[Proposition~2]{Fack} and \cite[Proposition~3.2]{Kalton}. 

The   Aoki--Rolewicz theorem  states that if \( 0 < r \leq 1 \) satisfies \( C = 2^{1/r - 1} \), then there exists a constant \( B \) such that for any finite collection \( x_1, \dots, x_n \in X \), the inequality
\[
\left\| \sum_{k=1}^n x_k \right\| \leq B \left( \sum_{k=1}^n \|x_k\|^r \right)^{1/r}
\]
holds \cite{Kalton03}. Moreover, it is possible to replace the original quasi-norm \( \norm{\cdot} \) with an equivalent \( r \)-subadditive quasi-norm, denoted by \( \norm{\cdot}_2 \), satisfying
\begin{align}\label{pnorm}
\|x_1 + x_2\|_2 \leq \left( \|x_1\|_2^r + \|x_2\|_2^r \right)^{1/r}.
\end{align}

A space \( X \) is called \emph{\( r \)-normable} if inequality \eqref{pnorm} is satisfied. If the quasi-norm on \( X \) is itself \( r \)-subadditive, we say that \( X \) is \emph{\( r \)-normed}. For convenience, it is often assumed---unless stated otherwise---that a quasi-Banach space is \( r \)-normed for some \( r > 0 \).

A metric on $X$ can be defined by \( d(x, y) = \|x - y\|_2^r \). The space \( X \) is said to be a \emph{quasi-Banach space} if it is complete under this metric. Note that if \( X \) is \( r \)-normed for some \( r > 0 \), then the quasi-norm is continuous with respect to the metric topology, see also \cite{NP24}. 

Let $E(0,\infty)$ be a quasi-Banach symmetric function space. 
For $T\in E(0,\infty)$, the function $\varLambda(T):(0,\infty)\to \mathbb{R}^{+}$ is defined by 
$$ \varLambda_t(T)=\exp\left(\int_0^t\ln(\mu(s;T))ds\right)\in\mathbb{R}^{+},\quad t\in(0,\infty).$$
The space $E(0,\infty)$ is called \emph{geometrically stable} if for any $T\in E(0,\infty)$, we have  $\varLambda_t(T)^{1/t}\in E(0,\infty)$  (see \cite{Kalton,Fack,SZ18}). 

It is known that every quasi-Banach ideal is geometrically stable \cite[Proposition~3.2]{Kalton}, and every Banach symmetric function space is geometrically stable  \cite[Proposition~2]{Fack}.  
Below, 
we extend these results to the more general framework of quasi-Banach  symmetric function spaces.

\begin{proposition}\label{geometrically}
Every    quasi-Banach  symmetric function space $(E(0,\infty),\left\|\cdot\right\|_E)$ is geometrically stable. 
\end{proposition}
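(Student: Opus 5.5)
The goal is to show that for $T\in E(0,\infty)$ the function $g(t):=\varLambda_t(T)^{1/t}=\exp\bigl(\frac1t\int_0^t\ln\mu(s;T)\,ds\bigr)$ lies in $E(0,\infty)$. The plan is to dominate $g$ by a quasi-norm convergent series of dilated copies of a power of $\mu(T)$, and to sum that series using the Aoki--Rolewicz $r$-subadditive renorming $\norm{\cdot}_2$ together with completeness.

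\textbf{Step 1 (power trick).} We may assume $T=\mu(T)$ is decreasing. Fix $0<p<r$ with $p\le 1$. By Jensen's inequality for the concave function $\ln$ (the geometric mean is at most the $p$-power mean),
\begin{align*}
g(t)\le \Bigl(\frac1t\int_0^t\mu(s;T)^p\,ds\Bigr)^{1/p}=\bigl(C(\mu(T)^p)(t)\bigr)^{1/p},
\end{align*}
where $C$ is the Ces\`aro (Hardy) operator. Dyadic splitting of $(0,t)$ into the pieces $(2^{-k-1}t,2^{-k}t]$ gives
\begin{align*}
C(\mu(T)^p)(t)\le \sum_{k\ge0}2^{-k}\mu(2^{-k-1}t;T)^p=\sum_{k\ge 0}2^{-k}\bigl(D_{2^{k+1}}\mu(T)\bigr)(t)^p .
\end{align*}
Since $p\le 1$, the elementary inequality $(\sum c_k)^{1/p}$ bounded by an $\ell_p$-type sum is not available directly; instead I will use $p<1$ only so that the inner sum is handled, and then write
\begin{align*}
g\le \Bigl(\sum_{k\ge0}2^{-k}(D_{2^{k+1}}\mu(T))^p\Bigr)^{1/p}.
\end{align*}
To return to a linear sum, I will choose a decreasing step decomposition. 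Concretely, $g$ is dominated pointwise by $\sup_k 2^{-k/p}\,2^{k/p}\cdots$. A cleaner route is to note that if $\sum_k 2^{-k}=2$, then $(\sum_k 2^{-k}y_k^p)^{1/p}\le 2^{1/p}\sup_k y_k$ fails to decay. So I will instead take $p$ small and use the weighted bound $(\sum_k 2^{-k}y_k^p)^{1/p}\le c_p\sum_k 2^{-k/(2p)}y_k$, obtained from H\"older/convexity with geometric weights. This gives
\begin{align*}
g\le c_p\sum_{k\ge0}2^{-k/(2p)}\,D_{2^{k+1}}\mu(T).
\end{align*}

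\textbf{Step 2 (summation).} By \cite[Lemma 6.1.1]{DPS} (its quasi-normed version), $\norm{D_{2^{m}}}_{E\to E}\le A^m$ for some constant $A$, e.g.\ $A=2C_E$, because $D_2 f$ is equimeasurable with $f\oplus f$, which is a sum of two functions each equimeasurable with $f$. Choosing $p$ small enough that $2^{-1/(2p)}A<1$, the terms satisfy $\norm{2^{-k/(2p)}D_{2^{k+1}}\mu(T)}_2^r\lesssim \theta^{kr}$ with $\theta<1$. Hence the series is Cauchy in the metric $\norm{\cdot}_2^r$, and by completeness it converges in $E$ to some $h$. The partial sums increase and are nonnegative, so convergence in $E$, via convergence in measure (as $E\subseteq L_q+L_\infty$), identifies $h$ with the pointwise sum. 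Then $\mu(g)\le\mu(h)$, and symmetry gives $g\in E$.

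\textbf{Main obstacle.} I expect the delicate point to be the passage from the $p$-power mean back to a linear series with geometrically decaying coefficients, i.e.\ making the weight decay beat the dilation growth $A^k$. The mechanism is that the decay exponent $1/(2p)$ can be made arbitrarily large by shrinking $p$, whereas $A$ does not depend on $p$, so it should suffice to choose $p$ small. A secondary technical point is justifying that the limit in $E$ of the monotone partial sums coincides with the pointwise sum, which relies on the continuous embedding $E\subseteq S(0,\infty)$ in measure.
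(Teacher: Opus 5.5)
Your proof is correct. Its skeleton matches the paper's. Your dominating function $h=\sum_{k\ge0}2^{-k/(2p)}D_{2^{k+1}}\mu(T)$ is, up to an index shift and a constant, the paper's $f_T=\sum_{k\ge0}2^{-\theta k}D_{2^k}\mu(T)$ with $\theta=1/(2p)$. It is also summed in $E$ the same way: geometric weights beat the growth of $\norm{D_{2^k}}$, and then $r$-subadditivity and completeness finish.

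The difference is in how you get the pointwise bound of $g$ by a constant multiple of $h$. The paper does it directly. For $s\in[t2^{-k},t2^{-k+1}]$ it has $\mu(s;T)\le 2^{k\theta}f_T(t)\le (2t/s)^{\theta}f_T(t)$. Integrating the logarithm, with $\frac1t\int_0^t\ln(2t/s)\,ds=\ln 2+1$, gives $g\le e^{\theta(\ln2+1)}f_T$. You instead pass through Jensen to the $p$-power Hardy mean, then a dyadic discretization, then an $\ell_p$-to-linear comparison. This is longer, but it makes explicit a stronger fact: $\bigl(\frac1t\int_0^t\mu(s;T)^p\,ds\bigr)^{1/p}$ lies in $E$ for all sufficiently small $p>0$.

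Your version also has two technical advantages:
\begin{enumerate}
\item You bound the dilations in the original quasi-norm, via $\norm{D_2}_{E\to E}\le 2C_E$, and use the Aoki--Rolewicz renorming only for the summation. So you never need that renorming to be symmetric. The paper works with a symmetric $r$-norm and therefore must know the renorming preserves symmetry.
\item You justify that the $E$-limit of the increasing partial sums equals their pointwise sum. The paper uses this tacitly.
\end{enumerate}

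For the write-up:
\begin{enumerate}
\item Delete the abandoned attempts in Step 1: the remarks about $p\le 1$, the unfinished $\sup_k$ line, and the ``fails to decay'' remark.
\item Replace the appeal to H\"older by a one-line argument. With $z_k=2^{-k/(2p)}y_k$ and $Z=\sum_k z_k$, one has $\sum_k 2^{-k}y_k^p=\sum_k 2^{-k/2}z_k^p\le Z^p/(1-2^{-1/2})$. So $c_p=(1-2^{-1/2})^{-1/p}$ works for every $p>0$.
\item The final comparison should read $\mu(g)\le c_p\,\mu(h)$; you dropped the constant.
\item The cleanest justification that convergence in $E$ implies convergence in measure is $\mu(t;f)\norm{\chi_{(0,t)}}_E\le\norm{f}_E$.
\end{enumerate}
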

\begin{proof}
By the Aoki--Rolewicz  Theorem \cite[Theorem 1.3]{Kalton3}, we may assume that $\left\|\cdot\right\|_E$ is a symmetric  $r$-norm for some $0<r\leq1$, that is, for all $S,T\in E(0,\infty)$,
\begin{align}\label{rnorm}
\left\|S+T\right\|_E^r\leq \left\|S\right\|_E^r+\left\|T\right\|_E^r.
\end{align}
For $T\in E(0,\infty)$, by the definition of $\sigma_s$, for each $k\in\mathbb{N}^*$,  we have
\begin{align}\label{2knorm} 
\left\|D_{2^k}\mu(T)\right\|_E\leq 2^{k/r}\left\|\mu(T)\right\|_E .\end{align}
Fix $\theta>1/r$. 
Let $f_T=\sum_{k=0}^\infty 2^{-\theta k}D_{2^k}\mu(T)$.
By the completeness of $E(0,\infty )$, 
$f_T$ converges in $E(0,\infty)$.
 Since 
 $$
  \norm{ 2^{-\theta k}D_{2^k}\mu(T)   }_{E}\stackrel{\eqref{2knorm}}{\leq } 
 2^{k/r-\theta k} 
 \norm{ \mu(T)   }_{E},
 $$ 
 it follows that 
\begin{align*}
   \left\|f_T\right\|_E^r
 &\nonumber  \stackrel{\eqref{rnorm}}{\leq }  
   \sum_{k=0}^\infty  
  \norm{ 2^{-\theta k}D_{2^k}\mu(T)   }_{E}^r 
  \leq  \sum_{k=0}^\infty  
  2^{k(1-r\theta)} 
 \norm{ \mu(T)   }_{E}^r\\ &
 \,\,=
 \frac{1}{1-2^{1-r\theta}}\left\|\mu(T)\right\|_E^r, 
\end{align*}
 which implies that 
\begin{equation}\label{f_T}
\left\|f_T\right\|_E\leq \frac{1}{(1-2^{1-r\theta})^{1/r}}\left\|\mu(T)\right\|_E.
\end{equation}
Fix $t>0$. 
For any $s>0$ such that $t2^{-k}\leq s\leq t2^{-k+1}$, 
we have 
\[
\mu(s;T)\leq \mu(t2^{-k};T)\leq 2^{k\theta}f_T(t)
\leq
\left(\frac{2t}{s}\right)^{\theta}f_T(t).
\]
Hence, we have 
\begin{align}\label{varLambda}\begin{split}
\varLambda_t(T)^{1/t}=\exp\left(\frac{1}{t}\int_0^t\ln\mu(s;T)ds\right)& \leq \exp\left(\frac{1}{t}\int_0^t  \ln \left(  \left(\frac{2t}{s}\right)^{\theta}f_T(t) \right)  ds\right ) \\
&=e^{\theta(\ln(2)+1)}f_T(t).
\end{split}
\end{align}
It follows from $f_T\in E(0,\infty)$ and the definition of $E(0,\infty)$ that 
$\varLambda_t(T)^{1/t} \in E(0,\infty)$.
\end{proof}

The following result is  certainly known to experts, and therefore, its proof is omitted.

	\begin{theorem}\label{completion}
	Assume that $I$ is  $(0, 1)$, $(0, \infty)$, or $\mathbb{N}$.	Let $E(I)\subseteq S(I)$ be a symmetrically  $r$-normed    function space,  \( 0 < r \leq 1 \).  Suppose that  $(E(I), \left\|\cdot\right\|_E )$ is not complete. Then,  \( E(I) \) is isomorphic and isometric to a dense linear subspace of an $r$-Banach   space \( (\bar{E}(I), \norm{\cdot}_{\bar{E}} ) \subseteq S(I)\), i.e., there exists a one-to-one correspondence \( x \leftrightarrow \widetilde{x} \) of \( E(I) \) onto a dense linear subspace of \( \bar{E}(I) \) such that
		\[
		\widetilde{x + y} = \widetilde{x} + \widetilde{y}, \quad  \widetilde{\alpha x} = \alpha \widetilde{x}, \quad \|\widetilde{x}\|_{\bar{E}} = \|x\|_{E}. 
		\]
		The space \( \bar{E}(I) \) is uniquely determined up to isometric isomorphisms.
	\end{theorem}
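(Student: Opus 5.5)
The natural route is the classical completion of a metric space, adapted to the symmetric function-space setting. Since $\|\cdot\|_E$ is $r$-subadditive, $d(x,y)=\|x-y\|_E^r$ is a translation-invariant metric on $E(I)$. First I would form the abstract completion $\widehat{E}$ as equivalence classes of $d$-Cauchy sequences. Addition and scalar multiplication extend by continuity, and the quasi-norm extends via $\|[x_n]\|:=\lim_n\|x_n\|_E$. This limit exists because $\big|\|x_n\|_E^r-\|x_m\|_E^r\big|\le \|x_n-x_m\|_E^r$. The extended quasi-norm is again $r$-subadditive and homogeneous, so $\widehat{E}$ is an $r$-Banach space. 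It contains $E(I)$ isometrically and densely via constant sequences, and it is unique up to isometric isomorphism by the universal property of completions. What remains is to realise $\widehat{E}$ concretely inside $S(I)$ as a symmetric space.

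For the realisation step I would use that $E(I)$ embeds continuously into $S(I)$ equipped with the measure topology. Given $x\in E(I)$ and $s>0$, set $f=\mu(s;x)\chi_{(0,s)}$. Then $\mu(x)\ge f$, so $\mu(s;x)\,\|\chi_{(0,s)}\|_E\le\|x\|_E$. Hence $\mu(s;x)\le \|x\|_E/\varphi_E(s)$, where $\varphi_E(s)=\|\chi_{(0,s)}\|_E>0$ is the fundamental function; it is nonzero because $E\neq\{0\}$ and any nonzero element dominates a multiple of some $\chi_{(0,s)}$ in rearrangement. Consequently a $d$-Cauchy sequence $(x_n)$ in $E$ is Cauchy in measure, at least on sets of finite measure, and since $S(I)$ is complete it has a limit $x\in S(I)$. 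I would define $\bar E(I)$ to be the set of such limits and let $\widetilde{[x_n]}=x$. Linearity of $[x_n]\mapsto x$ is immediate.

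The main obstacle is injectivity: if $x_n\to0$ in measure and $(x_n)$ is $d$-Cauchy, one must show $\|x_n\|_E\to0$. For a general quasi-normed space this fails, so it must come from symmetry. I would first try the following argument. Fix $\varepsilon>0$ and choose $N$ so that $\|x_n-x_m\|_E<\varepsilon$ for all $n,m\ge N$. For fixed $n$, we have $x_n-x_m\to x_n$ in measure as $m\to\infty$. It would suffice that $\|\cdot\|_E$ satisfies a Fatou-type lower semicontinuity with respect to convergence in measure, i.e. $\|x_n\|_E\le\liminf_m\|x_n-x_m\|_E\le\varepsilon$. Lower semicontinuity does not hold automatically. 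I would obtain it by replacing $E$ with its Fatou hull in the way done for Banach lattices: work with $\mu$, use that $\mu(x_n-x_m)\to\mu(x_n)$ almost everywhere at continuity points, truncate $\mu(x_n)$ to $\min(\mu(x_n),k)\chi_{(0,k)}$, and compare with $\mu(x_n-x_m)$ via the rearrangement estimate $\mu(t+s;y)\le\mu(t;y-z)+\mu(s;z)$. If this direct route fails for nonseparable (e.g. $L_\infty$-type) parts, I would instead restrict to the separable part, or define $\bar E$ as the set of limits modulo the null-class and check that the null-class is trivial by this truncation argument.

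Finally I would verify that $\bar E(I)$ is symmetric. Suppose $\mu(y)\le\mu(x)$ with $x=\lim x_n$. Write $y=\omega\cdot\mu(y)\circ\sigma$ for a measure-preserving rearrangement realisation, when $I$ is atomless, and use the obvious reordering in the $\mathbb{N}$ case. Then approximate $\mu(y)$ by $\min(\mu(y),\mu(x_n))$ transported in the same way. These approximants lie in $E$, are Cauchy, and converge to $y$, which gives $y\in\bar E$ and $\|y\|_{\bar E}\le\|x\|_{\bar E}$.
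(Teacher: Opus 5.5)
The paper gives no proof of this statement (it is called known and omitted), so the only question is whether your route works. Your abstract completion and your estimate $\mu(s;x)\le\|x\|_E/\varphi_E(s)$ are fine, and they make $[x_n]\mapsto\lim_n x_n$ a well-defined linear map $\widehat E\to S(I)$. The route fails at exactly the step you flag: injectivity, i.e.\ the implication ``$(x_n)$ is $\|\cdot\|_E$-Cauchy and $x_n\to0$ in measure $\Rightarrow\|x_n\|_E\to0$''. Symmetry does not give this, and no truncation or Fatou-hull argument can, because it is false.

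Let $\psi(t)=\log(1+t)$ and let $E=M_\psi(0,\infty)\subseteq L_\infty(0,\infty)$ be the Marcinkiewicz space, normed by
\[
\|g\|_E:=\|g\|_\infty+\limsup_{t\to\infty}\frac{1}{\log(1+t)}\int_0^t\mu(s;g)\,ds .
\]
Both terms depend monotonically on $\mu(g)$ and are subadditive, using $\int_0^t\mu(s;g+h)\,ds\le\int_0^t\mu(s;g)\,ds+\int_0^t\mu(s;h)\,ds$. So this is a symmetrically normed function space in the sense of Definition~\ref{Sym}.

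Take $x_n(t)=\frac{1}{1+t}\chi_{(n,\infty)}(t)$. Then $\mu(t;x_n)=\frac{1}{1+n+t}$, hence $\|x_n\|_E=1+\frac{1}{1+n}$. On the other hand, $x_n-x_m$ is bounded by $\frac{1}{1+\min(n,m)}$ and supported on a set of finite measure. So the $\limsup$ term vanishes and $\|x_n-x_m\|_E=\frac{1}{1+\min(n,m)}\to0$.

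Thus $(x_n)$ is Cauchy (so $E$ is not complete) and $x_n\to0$ uniformly, in particular in measure, yet $\|x_n\|_E\to1$. Consequently:
your map sends a norm-one element of $\widehat E$ to $0$;
the inequality $\|x_n\|_E\le\liminf_m\|x_n-x_m\|_E$ fails;
the value $\lim_n\|f_n\|_E$ depends on the chosen representative.

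Replacing the norm by its Fatou hull turns it into $\|\cdot\|_\infty$ and destroys the isometry. Passing to a separable part changes the space. In fact your own estimate shows that no symmetric quasi-Banach space $\bar E\subseteq S(0,\infty)$ can contain $(E,\|\cdot\|_E)$ isometrically with $\tilde x=x$. Norm convergence in $\bar E$ would force $x_n$ to converge in $\bar E$ to its measure limit $0$.

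So the concrete realization you aim for needs an extra hypothesis. This is also exactly what Lemma~\ref{complete} presupposes when it sets $\|f\|_{\bar E}=\lim_n\|f_n\|_E$ for $f_n\to f$ in measure. A suitable hypothesis is lower semicontinuity of $\|\cdot\|_E$ with respect to convergence in measure (a Fatou-type property); under it your injectivity argument goes through verbatim.

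Without such a hypothesis, only the literal wording survives, in which $x\leftrightarrow\tilde x$ is an arbitrary linear correspondence and $\bar E(I)$ need not be symmetric. That version follows from your first paragraph plus a Hamel-dimension count. $\widehat E$ has algebraic dimension at most the continuum, which is the algebraic dimension of $S(I)$, so one can transport the quasi-norm along any linear injection $\widehat E\to S(I)$. Such a $\bar E(I)$, however, carries none of the structure used afterwards.

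A smaller point concerns your last paragraph. Showing that the approximants $\min(\mu(y),\mu(x_n))$ are Cauchy requires bounding $\|\mu(x_n)-\mu(x_m)\|_E$ by $\|x_n-x_m\|_E$, and this is unavailable for $r<1$. In $\ell_p$ with $p<1$, $u=(0,1,0,\dots)$ and $v=(2,1,0,\dots)$ give $\|\mu(u)-\mu(v)\|_p=2^{1/p}>2=\|u-v\|_p$. Pointwise lattice operations on $|x_n|$ combined with a fixed measure-preserving change of variables, as in Lemma~\ref{complete}, avoid this.
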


The term \emph{measure preserving} (see \cite{CSZ24}) for a measurable map $\omega$ 
between measure spaces $(\Omega_1, \mathcal{A}_1, m_1)$ and $(\Omega_2, \mathcal{A}_2, m_2)$ means that 
\[
\forall A \in \mathcal{A}_1, \quad \omega(A) \in \mathcal{A}_2 \quad \text{and} \quad m_2(\omega(A)) = m_1(A).
\]

\begin{lemma}\label{complete}
If  $E(0,\infty)\subseteq S(0,\infty)$ is 	a  symmetrically $r$-normed    function space,  \( 0 < r \leq 1 \),  then   	
its completion $({\bar E}(0,\infty),\left\|\cdot\right\|_{\bar E})\subseteq S(0,\infty)$  
 is  a quasi-Banach  symmetric function space.
\end{lemma}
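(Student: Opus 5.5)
The plan is to take the abstract completion $(\bar E,\|\cdot\|_{\bar E})$ supplied by Theorem~\ref{completion}, realise it concretely inside $S(0,\infty)$, and then verify the three conditions of Definition~\ref{Sym}; completeness is automatic. The realisation rests on one estimate. Since $E(0,\infty)$ is a quasi-normed symmetric space, convergence in $\|\cdot\|_E$ should imply convergence in measure. The intended route is a Chebyshev-type bound: for $x\in E$ and $t>0$ with $\chi_{(0,t)}\in E$ (or some nonzero $\chi_{(0,t_0)}$, using dilations),
\[
\mu(t;x)\,\|\chi_{(0,t)}\|_E\le \|\mu(x)\chi_{(0,t)}\|_E\le \|x\|_E .
\]
This follows from the monotonicity in Definition~\ref{Sym}(3), because $\mu(t;x)\chi_{(0,t)}\le\mu(x)$. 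Hence $\mu(t;x_n-x_m)\to0$ for every fixed $t$, so a $\|\cdot\|_E$-Cauchy sequence is Cauchy in the measure topology of $S(0,\infty)$. That topology is complete, so the sequence has a limit $x\in S(0,\infty)$.

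Step 1 is to define $\bar E$. It will consist of all limits in measure of $\|\cdot\|_E$-Cauchy sequences $(x_n)\subseteq E$, with $\|x\|_{\bar E}:=\lim_n\|x_n\|_E$. I need this definition to be independent of the chosen sequence. If two Cauchy sequences have the same limit in measure, their difference is a Cauchy sequence $y_n\to0$ in measure, and I must show $\|y_n\|_E\to0$. This is the main obstacle, since a general quasi-norm need not have the Fatou property. I will try to get it from the $r$-norm structure together with lower semicontinuity. Passing to a subsequence with $\sum\|y_{n+1}-y_n\|_E^r<\infty$, I write $y_n=-\sum_{k\ge n}(y_{k+1}-y_k)$, with convergence in measure. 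A symmetric quasi-norm is controlled on positive parts by monotonicity, so I will try to bound $\|y_n\|_E$ through the tail series, using $\mu$ of a series dominated via partial sums. If this fails, the fallback is to define $\bar E$ abstractly via Theorem~\ref{completion} and show that the map $\bar E\to S(0,\infty)$ is injective using the same Cauchy-tail argument. In either form the nontrivial input is the identity theorem for these limits.

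Step 2 is symmetry. Let $x\in\bar E$ be the limit of $x_n\in E$, and let $y\in S$ with $\mu(y)\le\mu(x)$. I will approximate $y$ by elements of $E$. First take $x_n$ converging in measure with $\mu(x_n)\to\mu(x)$ a.e. Then, on a nonatomic space, write $y=\mu(y)\circ\omega$ for a measure-preserving $\omega$, which is where the notion of a measure-preserving map is used. Set $y_n:=\min(\mu(y),\mu(x_n))\circ\omega$, after making the $\mu(x_n)$ monotone via Cauchy tails. Each $y_n$ lies in $E$ with $\|y_n\|_E\le\|x_n\|_E$, and $y_n\to y$ in measure. To see that $(y_n)$ is $\|\cdot\|_E$-Cauchy, use $|y_n-y_m|\le|\mu(x_n)-\mu(x_m)|\circ\omega$ together with $\mu(\mu(x_n)-\mu(x_m))\prec$-type control by $\mu(x_n-x_m)$. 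This last step is delicate for quasi-norms, so I expect it to need a dilation $D_2$ and the bound $\|D_2 f\|\le 2^{1/r}\|f\|$. That constant, however, would ruin the isometry $\|y\|_{\bar E}\le\|x\|_{\bar E}$.

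To keep the sharp inequality, I will instead choose truncations $y_n:=y\chi_{\{|y|\le n\}}\chi_{A_n}$ with $\tau(A_n)<\infty$ chosen so that $\mu(y_n)\le\mu(x_n)$ pointwise. This is possible after replacing $x_n$ by a subsequence with $\mu(x_n)\ge(1-1/n)\mu(x)$ on $(0,n)$. The resulting $(y_n)$ is Cauchy because it increases to $y$ and the differences are dominated by $\mu$ of tails of $x$. Letting $n\to\infty$ then gives $\|y\|_{\bar E}\le\|x\|_{\bar E}$.
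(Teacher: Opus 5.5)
Your symmetry step has a genuine gap. The final approximants $y_n=y\chi_{\{|y|\le n\}}\chi_{A_n}$ with $m(A_n)<\infty$ cannot converge to $y$ in measure on $(0,\infty)$ once $m(\{|y|>\epsilon\})=\infty$ for some $\epsilon>0$ (e.g.\ $y=\chi_{(0,\infty)}$), because $|y-y_n|>\epsilon$ on a set of infinite measure. The normalisation $\mu(x_n)\ge(1-1/n)\mu(x)$ on $(0,n)$ does not follow from convergence in measure; it fails near $0$ whenever $\mu(x)$ is unbounded and the $x_n$ are bounded. Most importantly, ``$(y_n)$ increases to $y$ and its differences are dominated by tails of $x$'' does not make $(y_n)$ Cauchy in $\norm{\cdot}_E$. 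There is no order continuity, and the only things you know to be small in norm are the differences $x_n-x_m$, not pieces of $x$.

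The missing idea, which is how the paper proceeds, is to choose approximants whose differences are dominated pointwise by $|x_n-x_m|$. If $|g|\le|f|$, set $g_n=|f_n|\wedge|g|$. Then $|g_n-g_m|\le|f_n-f_m|$, $\norm{g_n}_E\le\norm{f_n}_E$ and $g_n\to|g|$ in measure, so $\norm{g}_{\bar E}\le\norm{f}_{\bar E}$ with no constant lost. The case $\mu(y)\le\mu(x)$ is reduced to this lattice case by composing the \emph{whole} representing sequence $(|f_n|)$ with one fixed measure-preserving map $\sigma$ satisfying $(1+\varepsilon)^{-1}\mu(f)\le|f|\circ\sigma$, and a second map for the reverse direction. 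Since $\mu(h\circ\sigma)\le\mu(h)$, the sequence $(|f_n|\circ\sigma)$ stays Cauchy and only a factor $1+\varepsilon$ is lost.

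This also explains why your first variant stalls. The constant $2^{1/r}$ would have been harmless there, since it only affects the Cauchy property. The real problem is that the estimate you need, smallness of $\norm{\mu(x_n)-\mu(x_m)}_E$, is not supplied by the dilation bound. Moreover, an exact representation $|y|=\mu(y)\circ\omega$ need not exist on $(0,\infty)$: for $y(s)=1-e^{-s}$ one has $\mu(y)\equiv1$.

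Your Step 1 obstacle is real, and neither lower semicontinuity nor a Cauchy-tail argument can remove it, because in this generality the identification is false. Let $\psi(t)=\log(e+t)$, and let $E$ be the set of $x\in S(0,\infty)$ with $\sup_{t>0}\psi(t)^{-1}\int_0^t\mu(s;x)\,ds<\infty$. Put $N(x)=\int_0^1\mu(s;x)\,ds+\mathrm{Lim}_k\,\psi(k)^{-1}\int_0^k\mu(s;x)\,ds$, where $\mathrm{Lim}$ is a Banach limit; this is a symmetric norm in the sense of Definition~\ref{Sym}. Take $x(s)=(1+s)^{-1}$ and $x_n=x\chi_{(n,\infty)}$. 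For $m>n$ we get $N(x_n-x_m)\le(1+n)^{-1}$, because the difference has bounded integral and the Banach-limit term vanishes. Also $\mu(x_n)\le(1+n)^{-1}$, so $x_n\to0$ in measure, yet $N(x_n)\ge1$. Thus the same limit in measure, $0$, is represented by Cauchy sequences with different values of $\lim_n N(x_n)$, and the completion does not sit inside $S(0,\infty)$ via limits in measure.

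The paper does not prove this step either. It defines $\norm{f}_{\bar E}=\lim_n\norm{f_n}_E$ over a representative sequence and takes $\bar E\subseteq S(0,\infty)$ from the unproved Theorem~\ref{completion}. Its Steps (2)--(5) are therefore conditional on that identification, or on an extra hypothesis such as the Fatou property. Granting the identification, what you still owe is the symmetry step, and that is where your argument breaks down.
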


\begin{proof}

(1)	Recall 
the $r$-norm on the  completion $\bar E(0,\infty)$ is defined by \begin{align}\label{defbarE}
	\norm{f}_{\bar{E}} = \mathop{\lim}\limits_{n \to \infty} \norm{f_n}_E, 
\end{align}
 where   \(\{f_n\}_{n\geq 1}\subseteq E(0, \infty)\) is a representative  Cauchy  sequence of $f\in \bar{E}(0, \infty)$. Moreover, 
 \(\{f_n\}_{n\geq 1}\) is a  Cauchy sequence with respect to  the  measure topology as $n\to\infty$ (see e.g. \cite{Sukochev17}, \cite{HGS17} or  \cite[Proposition 4.4.4]{DPS}).
Recall from \cite[Proposition 2.5.7]{DPS} that, for a  sequence $\{f_n\}_{n\geq 1} \subseteq S(0,\infty)$, $f_n\stackrel{\cT_m}{\longrightarrow}f$ if and only if
$\lim\limits_{n\to \infty} \mu( f_n-f
)=0
$.

For arbitrary $h\in E(0, \infty)$, by the symmetricity of  
  $E(0, \infty)$,    we have that 
$|h|\in E(0, \infty)$  and 
\begin{align}\label{|fn|E}
	\norm{|h|}_E=\norm{h}_E=\norm{h}_{\bar{E}}.
\end{align}

For arbitrary $f,g\in S(0,\infty)$, we have 
$
0\leq \Big|   |f|-|g|
\Big| 
\leq |f-g|.  
$ 
It follows from \cite[Proposition 3.2.7]{DPS}
that 
\begin{align*}
	\mu\left( |f|-|g|
\right)= \mu\left(\Big|   |f|-|g|
\Big| \right)
& \leq 
\mu(|f-g|)=\mu(f-g).
\end{align*} 
In particular, if $f-g\in E(0,\infty)$, then we have 
 $|f|-|g|\in E(0,\infty)$ and 
$\norm{|f|-|g|}_E\leq \norm{f-g}_E$. 

(2) We claim  that  for any $ f\in S(0,\infty)$,  
$f\in {\bar E}(0,\infty)$ if and only if 
$\left|f\right| 
\in {\bar E}(0,\infty)$. Moreover, 
we have 
\begin{align}\label{9.2}
\norm{f}_{\bar E}=\norm{\left|f\right|}_{\bar E}. 
\end{align}

Let $f \in {\bar E}(0,\infty)$. By the definition  of $\bar{E}(0,\infty )$, there exists a Cauchy sequence $ \{f_n\}_{n\geq 1} \subseteq E(0,\infty)$ with respect to the  $\norm{\cdot}_E$-norm topology such that
$f_n\stackrel{\cT_m}{\longrightarrow} f$ 
as $n\to\infty$. 
Since 
$
	\mu\left( |f_n|-|f|
\right)\leq 
\mu(f_n-f) 
$ and $\norm{|f_n|-|f_m|}_E\leq \norm{f_n-f_m}_E$,
it follows that $|f_n|\stackrel{\cT_m}{\longrightarrow} |f|$ 
as $n\to\infty$ and 
$\{|f_n|\}_{n\geq 1}\subseteq  E(0,\infty)$ is a Cauchy sequence with respect to the  $\norm{\cdot}_E$-norm topology.
Hence,  $
 |f|  \in {\bar E}(0,\infty)
$ and 
$\norm{ \left| f\right|  }_{\bar{E}}
\stackrel{\eqref{defbarE}}{=}
\lim\limits_{n\to\infty}
\left\||f_n|\right\|_{E}
\stackrel{\eqref{|fn|E}}{=}
\lim\limits_{n\to\infty}
\left\|f_n\right\|_{E}
\stackrel{\eqref{defbarE}}{=}
 \norm{ f}_{\bar{E}}$.

If $|f|\in \bar{E}(0, \infty)$, then,  arguing similarly, we obtain that $f\in \bar{E}(0,\infty)$.  


(3) 
Suppose that  \( f\in {\bar E}(0,\infty) \) and \( g\in S(0,\infty)\)   such that  \( 0\leq |g| \leq \left|f\right| \).
We claim that $g\in \bar{E}(0,\infty)$ and $ \norm{g}_{\bar E}\le \norm{f}_{\bar E}$. 

Let $\{|f_n|\}_{n\geq 1}\subseteq  E(0,\infty)$ be a  Cauchy  sequence such that $|f_n|\stackrel{\cT_m}{\longrightarrow} |f|$ 
as $n\to\infty$ (see Step ~(2)). 
Define, for each $n\geq 1$,  
\begin{align*}
	0\leq g_n:= |f_n| \wedge |g|
	\leq |f_n|.
\end{align*}

Observe that 
$|g_n-g_m|\leq |f_n-f_m|$ and 
$
	| |g|-g_n|
	\leq 	|   f-f_n | 
$.
 We have 
$\{g_n\}_{n\geq 1}\subseteq  E(0,\infty)$ is a Cauchy sequence 
 such that
$g_n\stackrel{	\cT_m}{\longrightarrow} |g| $  as $n\to\infty$. 
Hence, $|g|\in \bar{E}(0,\infty )$ and 
$g_n \to |g|$ as $n\to \infty $ in $\bar{E}(0,\infty )$.  
Moreover, we have 
\begin{align}\label{gnfn}
		\norm{g_n}_{E}
	\leq 
	\norm{|f_n|}_{E}
	\stackrel{\eqref{|fn|E}}{=}
	\norm{f_n}_{E}.
\end{align}
By Step (2), $g \in \bar{E} (0,\infty)$, 
and 
$$\|g\|_{\bar{E}}
\stackrel{\eqref{9.2}}{=}
\norm{ |g|}_{\bar{E}}
\stackrel{\eqref{defbarE}}{=}
\lim\limits_{n\to\infty}
\|g_n\|_{E} 
\stackrel{\eqref{gnfn}}{\leq }
\lim\limits_{n\to\infty}
\|f_n\|_{E} \stackrel{\eqref{defbarE}}{=}
\norm{f}_{\bar{E}}.$$
This proves the claim.

(4)
We claim that for any $ f\in S(0,\infty)$, 
$f\in \bar{E}(0,\infty)$  if and only if $\mu(f)\in\bar{E}(0,\infty)$. Moreover, 
\begin{align}\label{fE=mufE}
	\norm{f}_{\bar{E}}=\norm{\mu(f)}_{\bar{E}}.
\end{align}

By  \cite[Lemma 2.8]{CSZ24}, 
for arbitrary  $0\le f\in S(0,\infty)
$ and  $\varepsilon>0$,   there exist two
measure preserving maps $\sigma_1:    \operatorname{supp}(\mu(f))\to \operatorname{supp}(f)$ and  $\sigma_2:    \operatorname{supp}(\mu(f))\to\operatorname{supp}(f)$ 
such that 
\begin{align}\label{twosides}
|f|\circ\sigma_1 	\le (1+\varepsilon)\mu(f) \quad \text{ and} \quad
	\frac{1}{1+\varepsilon} 
	\mu(f)
	\le 
	|f|\circ\sigma_2  .    
\end{align}

Suppose first that $f\in \bar{E}(0,\infty)$ with $\{f_n\}_{n\ge 1}\in E(0,\infty)$ converging to $f$. By Step (2),  $|f|\in \bar{E}(0,\infty)$ and there exists a Cauchy sequence   $\{|f_n|\}_{n\geq 1}\subseteq  E(0,\infty)$ such that 
$\lim\limits_{n\to \infty} \mu(|f_n|-|f| )=0. 
$
Note that 
$
	\mu\Big(|f_n| \circ  \sigma_2
\Big)
\leq 
\mu(f_n), 
$ 
which implies that 
$\{|f_n|\circ \sigma_2 \}_{n\geq 1}\subseteq E(0,\infty)$ is a Cauchy sequence 
and 
\begin{align}\label{normfn}
	\norm{|f_n| \circ  \sigma_2}_{E}
\leq
 \norm{f_n}_{E}. 
\end{align} 
Moreover, we have  
$\lim\limits_{n\to \infty}\mu\left(|f_n|\circ \sigma_2-|f|\circ \sigma_2\right)=0,$ 
i.e., 
$|f_n|\circ  \sigma_2 \stackrel{\cT_m}{\longrightarrow} |f|\circ  \sigma_2 $ as $n\to\infty$. 
Hence,  
\begin{align}\label{0.1}
	\norm{|f| \circ  \sigma_2}_{\bar{E}}
	\stackrel{\eqref{9.2}}{=} 
\lim\limits_{n\to\infty}
\norm{|f_n| \circ  \sigma_2}_{E}
\stackrel{\eqref{normfn}}{\leq} 
\lim\limits_{n\to\infty}\norm{f_n }_{E}	
\stackrel{\eqref{9.2}}{=} 
\norm{f }_{\bar{E}}.
\end{align}
Combining with \eqref{twosides} and Step (3), 
we obtain that $\mu(f)\in \bar{E}(0,\infty)$ and 
$$	\frac{1}{1+\varepsilon} 
\norm{ \mu(f)}_{\bar{E}}
\leq 
\norm{ |f|\circ\sigma_2  }_{\bar{E}}
\stackrel{\eqref{0.1}}{\leq} 
\norm{f }_{\bar{E}}. $$
Since $\varepsilon$ is arbitrarily taken, 
it follows that $	
\norm{ \mu(f)}_{\bar{E}}
\leq 
 \norm{f }_{\bar{E}}. $

Arguing similarly, we obtain that 
 if  $\mu(f)\in\bar{E}(0,\infty)$, then 
 $f\in\bar{E}(0,\infty)$ and 
  $	 
 \norm{f }_{\bar{E}}\leq \norm{ \mu(f)}_{\bar{E}}. $ 
 Hence, 
  $	 
 \norm{f }_{\bar{E}}=\norm{ \mu(f)}_{\bar{E}}. $

(5) We claim that the quasi-norm of   ${\bar E}(0,\infty)$ is symmetric.

If \( f \in {\bar E}(0,\infty) \) and \( g \in S(0,\infty) \) such that \( \mu(f) = \mu(g) \), it follows from Step ~(4) that 
$ \mu(g)= \mu(f)  \in {\bar E}(0,\infty)$. 
This  implies that 
$g\in {\bar E}(0,\infty) $ (see Step (4))   and \( \|g\|_{\bar{E}}
\stackrel{\eqref{fE=mufE}}{=}
\|\mu(g)\|_{\bar{E}} = \|\mu(f)\|_{\bar{E}}
\stackrel{\eqref{fE=mufE}}{=}
 \|f\|_{\bar{E}} \).  
That is, $\bar{E}(0,\infty)$ is a quasi-Banach  symmetric function space. 

The proof is complete. 
\end{proof}

\begin{proof}[Proof of Proposition~\ref{log}] 
By the Aoki--Rolewicz theorem, it suffices to consider the case when $\norm{\cdot}_E$ is an $r$-norm for some $0<r\le 1$\footnote{Note that for a symmetrically quasi-normed function space $E(0,\infty )$, the equivalent $r$-norm is also symmetric. This can be readily verified by   the construction in the proof of \cite[Theorem~1.2]{Kalton3}.}. 
By Lemma \ref{complete}, it suffices to prove the result under the assumption that $E(0,\infty)$ is an  $r$-Banach symmetric function space.

For any $T\in E(0,\infty)$, by Proposition~\ref{geometrically}, 
we have $\varLambda_t(T)^{1/t}\in E(0,\infty)$. Define a quasi-norm $\left\|\cdot\right\|_{E_1}$ on $E(0,\infty)$ by
\[
\left\|T\right\|_{E_1} := \left\|\varLambda_t(T)^{1/t}\right\|_E.
\]
We now show that $\left\|\cdot\right\|_{E_1}$ is a quasi-norm equivalent to $\left\|\cdot\right\|_E$, which is  monotone with respect to the logarithmic submajorisation.

 By the definition of $E(0,\infty)$, we have $\left\|\mu(T)\right\|_E = \left\|T\right\|_E$. 
Since   $\mu(T)$ is non-increasing, it follows that  
\begin{align*}
	\varLambda_t(T)^{{1/t}}=\exp\left(\frac{1}{t}\int_0^t\ln(\mu(s;T))ds\right)\geq \exp\left(\frac{1}{t}\int_0^t\ln(\mu(t;T))ds\right)=\mu(T), \quad\forall t>0. 
\end{align*}
Hence, we have that
\begin{align}\label{C_0}
	\left\|T\right\|_E \leq \left\|T\right\|_{E_1}.
\end{align}
Combining \eqref{f_T} with \eqref{varLambda}, we obtain
\begin{equation}\label{equivalent}
\left\|T\right\|_E \stackrel{\eqref{C_0}}{\leq} \left\|T\right\|_{E_1} =\left \|\varLambda_t(T)^{{1/t}}\right\|_E \stackrel{\eqref{f_T},\eqref{varLambda}}{\leq} C\left\|T\right\|_E,
\end{equation}
where $C>0$ is a constant depending only on $r$.  
 Since $\left\|\cdot\right\|_E$ is a quasi-norm, it follows that $\left\|\cdot\right\|_{E_1}$ satisfies the quasi-triangle inequality and is non-negative.
By \cite[Proposition~3.2.7]{DPS}, for arbitrary $\alpha \in \mathbb{C}$, we have
\begin{align*}
 \varLambda_t(\alpha T)^{{1/t}} &=\exp\left(\frac{1}{t}\int_0^t\ln(\mu(s;\alpha T))ds\right)
 \\&
 =\exp\left(\frac{1}{t}\int_0^t\ln |\alpha|+\ln(\mu(s;T))ds\right)=|\alpha|\varLambda_t(T)^{{1/t}},
\end{align*}
which implies the absolute homogeneity of $\left\|\cdot\right\|_{E_1}$.
Thus, it follows from inequality ~\eqref{equivalent} that $\left\|\cdot\right\|_{E_1}$ is indeed a quasi-norm equivalent to $\left\|\cdot\right\|_E$. 

Finally, the monotonicity (and the closedness) of $\norm{\cdot}_{E_1}$ with respect to the logarithmic submajorisation follows immediately from the definition of $\varLambda_t(T)^{{1/t}}$.  
\end{proof}

Even though not every symmetrically  quasi-normed function space is monotone with respect to $\prec\prec_{\log}$ (see Example \ref{Lp} below), Proposition \ref{log} shows that it has an equivalent quasi-norm which is monotone with respect to  $\prec\prec_{\log}$. This together with 
  Theorem \ref{main2} yields  the following corollary.

\begin{corollary}
	Suppose that
		$\cM$ is a   factor equipped with a semi-finite  faithful normal  trace~$\tau$. Let $E(0,\infty)$ be a symmetrically quasi-normed function space. 
		For any operators $a,b \in S(\cM,\tau)$,
		the range of $S_{a,b}$ is contained in 
 	    $E(\cM, \tau)$ if and only if 
 	    $\mu(a)\mu(b)\in E(0,\infty)$.
		Moreover, there exists a constant $C$ depending on $E(0,\infty)$ only, such that 
			\begin{align*}
			\norm{  \mu(a)\mu(b) }_{E}
			\leq
			\norm{S_{a,b}}_{\cM\to E(\cM, \tau)}
			\leq
			C\norm{  \mu(a)\mu(b) }_{E  }		.
		\end{align*}		

\end{corollary}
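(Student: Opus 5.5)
The plan is to combine Proposition~\ref{log} with Theorem~\ref{main2}. I will pass to an equivalent quasi-norm that is monotone with respect to $\prec\prec_{\log}$, apply the sharp formula there, and then transfer back to $\norm{\cdot}_E$ at the cost of the equivalence constant.

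\textbf{Characterization of the range and the lower bound.} Both of these already hold for an arbitrary symmetrically quasi-normed function space, by Lemma~\ref{EOEM2}. That lemma is stated for any semi-finite von Neumann algebra, and its proof uses Lemmas~\ref{compact} and \ref{2infnity}, which assume $\cM$ is a factor, as here. It gives:
\begin{itemize}
\item $\mathrm{Ran}(S_{a,b})\subseteq E(\cM,\tau)$ if and only if $\mu(a)\mu(b)\in E(0,\infty)$;
\item $\norm{S_{a,b}}_{\cM\to E(\cM,\tau)}\ge \norm{\mu(a)\mu(b)}_E$.
\end{itemize}
So the only remaining task is the upper bound with a constant $C$ depending only on $E$.

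\textbf{Upper bound.} By Proposition~\ref{log} there is a quasi-norm $\norm{\cdot}_{E_1}$ on $E(0,\infty)$ with
\[
\norm{f}_E\le \norm{f}_{E_1}\le C\norm{f}_E, \qquad f\in E(0,\infty),
\]
which is symmetric and monotone with respect to $\prec\prec_{\log}$. The constant $C$ comes from \eqref{equivalent} together with the Aoki--Rolewicz renorming and the completion step, so it depends only on $E$.

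The underlying set does not change, so $E_1(\cM,\tau)=E(\cM,\tau)$ as sets. Hence $\mu(a)\mu(b)\in E$ still gives $a,b\in L_{\log_+}(\cM,\tau)$ wherever this is needed.

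For $x\in\cM$ with $\norm{x}_\cM\le1$, Theorem~\ref{Weyllog} combined with $\mu(ax)\le\mu(a)$ gives
\[
axb\prec\prec_{\log}\mu(a)\mu(b).
\]
Monotonicity of $\norm{\cdot}_{E_1}$ then yields
\[
\norm{axb}_{E}\le\norm{axb}_{E_1}\le \norm{\mu(a)\mu(b)}_{E_1}\le C\norm{\mu(a)\mu(b)}_E .
\]
Equivalently, Theorem~\ref{main2} applied to $E_1$ gives $\norm{S_{a,b}}_{\cM\to E_1(\cM,\tau)}=\norm{\mu(a)\mu(b)}_{E_1}$, and the two equivalence inequalities convert this to $\norm{\cdot}_E$. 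Taking the supremum over $x$ proves the upper bound.

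\textbf{Main obstacle.} The delicate point is bookkeeping for Proposition~\ref{log}. Its proof first replaces $\norm{\cdot}_E$ by an equivalent $r$-norm and then passes to the completion $\bar E$. So $\norm{\cdot}_{E_1}$ is really defined via $\bar E$ and restricted to $E$, and the constant $C$ must absorb the Aoki--Rolewicz equivalence constant.

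I would also need to check the following:
\begin{itemize}
\item Monotonicity for $E_1$ only has to be applied to elements $axb$, which already lie in $E(\cM,\tau)$ by Lemma~\ref{E0EM}. So no membership issue arises from the completion.
\item If $\mu(a)\mu(b)\in E$ but $a$ or $b$ is not in $L_{\log_+}$, Theorem~\ref{Weyllog} cannot be used directly. In that case I would fall back on the pointwise estimate from Lemma~\ref{E0EM}, namely $\mu(axb)\le \norm{x}_\cM\, D_2(\mu(a)\mu(b))$. This gives an upper bound with constant $\norm{D_2}_{E\to E}$, which depends only on $E$ and therefore suffices for the corollary.
\end{itemize}
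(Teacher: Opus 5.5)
Your proposal is correct and is essentially the paper's argument. Lemma~\ref{EOEM2} gives the range characterisation and the lower bound with constant $1$, and the upper bound follows from the equivalent $\prec\prec_{\log}$-monotone quasi-norm of Proposition~\ref{log} together with Theorem~\ref{Weyllog} (i.e.\ Theorem~\ref{main2}); your remarks about the completion and the Aoki--Rolewicz constant are exactly the bookkeeping the paper leaves implicit.

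Note, however, that your fallback already proves the upper bound for all $a,b$ without Proposition~\ref{log}: the estimate $\mu(axb)\le\norm{x}_{\cM}D_2(\mu(a)\mu(b))$ from Lemma~\ref{E0EM}, together with $\norm{D_2}_{E\to E}\le 2C_E$, gives $C=2C_E$. The $L_{\log_+}$ concern is also vacuous, because for $a,b\neq0$ the condition $\mu(a)\mu(b)\in E\subseteq L_q+L_\infty$ forces $a,b\in L_{\log_+}(\cM,\tau)$. This follows from that inclusion, not from the fact that $E$ and $E_1$ have the same underlying set.
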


\begin{example}\label{Lorentz}
Let $(L_{p,q}(0,\infty), \left\|\cdot\right\|_{L_{p,q}})$ be the Lorentz function space for $0 < p < q < \infty$, and set $E(\cM,\tau) = L_{p,q}(\cM,\tau)$ as the corresponding noncommutative Lorentz space. For any $a, b \in S(\cM,\tau)$, the range of $S_{a,b}$ is contained in $L_{p,q}(\cM,\tau)$ if and only if $\mu(a)\mu(b) \in L_{p,q}(0,\infty)$. Moreover,
\[
\left\|S_{a,b}\right\|_{\cM \to L_{p,q}(\cM,\tau)} = \left\|\mu(a)\mu(b)\right\|_{L_{p,q}(0,\infty)}.
\]
\end{example}
\begin{proof}
By Theorem~\ref{main2}, it suffices to show that $\left\|\cdot\right\|_{L_{p,q}}$ is monotone with respect to  the logarithmic submajorisation. For $T\in L_{p,q}(0,\infty)$, the quasi-norm is given by (see \cite{Bennett_S})
\[
\|T\|_{L_{p,q}} = \left( \int_{0}^{\infty} \big( s^{1/p} \mu(s;T) \big)^q \,\frac{ds}{s} \right)^{1/q}.
\]
For $S,T\in L_{p,q}(0,\infty)$ with $S \prec\prec_{\log} T$, i.e.
\[
\int_0^t \log \mu(s; S) \, ds \leq \int_0^t \log \mu(s; T) \, ds, \quad \forall t > 0.
\]
Hence, for any $t>0$, we have 
\[\int_0^t  \log \mu(s; S)^q  \, ds= 
\int_0^t q \log \mu(s; S) \, ds \leq \int_0^t q \log \mu(s; T) \, ds =\int_0^t  \log \mu(s; T) ^q \, ds.
\]
Adding  $\int_0^t (\frac{q}{p}-1) \log s \, ds$ to both sides, we obtain 
\[
\int_0^t \log \big( s^{\frac1p-\frac1q } \mu(s; S) \big)^q \,ds 
\leq \int_0^t \log \big( s^{\frac1p-\frac1q } \mu(s; T) \big)^q \, ds,~\forall t>0 .
\]
By the Hardy--Littlewood--Polya inequality (see \cite[Chapter 1, Theorem D.2]{MOA}, \cite[Proposition 2.4]{Huang2} or  \cite[Proposition~1.3]{Hiai}), we have 
\[
\int_0^t \big( s^{\frac1p} \mu(s; S) \big)^q \,\frac{ds}{s} 
\leq \int_0^t \big( s^{\frac1p} \mu(s; T) \big)^q \,\frac{ds}{s}, \quad \forall t > 0.
\]
Letting $t\to\infty$, we obtain $\left\|S\right\|_{L_{p,q}} \leq \left\|T\right\|_{L_{p,q}}$.
\end{proof}

 The following example is in contrast with Example \ref{Lorentz} above, which shows that the natural quasi-norm of weak $L_p$-spaces are not   monotone with respect to $\prec\prec_{\log}$.

 \begin{example}\label{Lp}
    Let $r,p,q\in (0,\infty)$ be such that $\frac{1}{r}=\frac{1}{p}+\frac{1}{q}$. The quasi-norm $\norm{\cdot}_{r,\infty}$ defined by setting  
    $$\norm{z}_{r,\infty}=\sup_{t>0} t^{1/r} \mu(t;z),~z\in L_{r,\infty} (0,\infty )~(\mbox{or}~B(H)),$$
    is not monotone with respect to the logarithmic submajorisation. 
    Indeed, by \cite[Lemma 7]{SZ21}, there exists $x,y\in L_{r,\infty} (0,\infty ) $ such that 
    $\norm{x}_{p,\infty } =\norm{y}_{q,\infty }=1$ such that 
    $$  \norm{xy}_{r,\infty} \ge (1-\epsilon) \frac{(p+q)^{\frac{1}{p} +\frac{1}{q}}}{q^{\frac{1}{p}} p^{\frac{1}{q}}}\norm{x}_{p,\infty} \norm{y}_{q,\infty} = (1-\epsilon) \frac{(p+q)^{\frac{1}{p} +\frac{1}{q}}}{q^{\frac{1}{p}} p^{\frac{1}{q}}} . $$
    Note that $\mu(x)\mu(y)\le t^{-1/p}t^{-1/q} = t^{-1/r} $. We have 
    $\norm{\mu(x)\mu(y)}_{r,\infty }\le 1$, and therefore
    $$\norm{xy}_{r,\infty} \ge (1-\epsilon) \frac{(p+q)^{\frac{1}{p} +\frac{1}{q}}}{q^{\frac{1}{p}} p^{\frac{1}{q}}} \norm{\mu(x)\mu(y)}_{r,\infty}.$$
    Since $\epsilon>0$ is arbitrarily taken and $ \frac{(p+q)^{\frac{1}{p} +\frac{1}{q}}}{q^{\frac{1}{p}} p^{\frac{1}{q}}} >1$, it follows
    that 
    $$\norm{xy}_{r,\infty} >  \norm{\mu(x)\mu(y)}_{r,\infty}.$$
Hence, Theorem \ref{main2} fails for the natural quasi-norms of weak $L_p$-ideals.
In other words,  the quasi-norm $\norm{\cdot}_{r,\infty}$ is not monotone with respect to the logarithmic submajorisation. 

This fact leads to several open problems, e.g., whether the H\"older type inequalities given in \cite[Proposition 5.8, 5.12]{DDSZ} hold without the assumption of monotonicity with respect to $\prec\prec_{\log}$, and 
how to describe   (positive) linear    isometries   on  (commutative and noncommutative) weak-$L_p$ spaces under the natural quasi-norms (see \cite{Huang2,HS24} and references therein for results concerning isometries on symmetrically (quasi-)normed spaces whose (quasi-)norms are monotone with respect to $\prec\prec_{\log}$). 
\end{example} 

\begin{rem} 
A symmetric function space $E(0,\infty)$ is said to be   fully symmetric if for any  $f\in E(0,\infty)$, $g \in S(0,\infty )$, the inequality  $\int_0^t \mu(s; g)ds \le \int_0^t \mu(s; f)ds$, $0<t<\infty $,  implies that $g \in E(0,\infty)$ and $\left\|g\right\|_{E(0,\infty)} \le\left\|f\right\|_{E(0,\infty)}$. 
By the Hardy--Littlewood--Polya inequality (see \cite[Chapter 1, Theorem D.2]{MOA}), every fully symmetrically  normed function space is monotone with respect to $\prec\prec_{\log}$ (see also \cite{Huang2}). 
Most of the known examples of   symmetrically normed function  spaces are fully spaces\cite{KPS,DPS}. 
Moreover, it seems to be difficult to find a symmetric norm on a symmetric function space which is monotone with respect to $\prec\prec_{\log}$ but is not
fully symmetric\cite[p.826]{DDSZ}. 

It was known \cite[p.~2]{Sukochev19} that there exists a Banach symmetric sequence space whose norm is monotone with respect to $\prec\prec_{\log}$ but which does not admit an equivalent fully symmetric norm (see \cite{SSS} for concrete examples).
In particular, this answers a question raised in \cite[p.826]{DDSZ} in the setting of symmetric sequence spaces.
However, it is still unknown whether there exists  a symmetric function space $E(0,\infty)$ whose norm   is monotone with respect to $\prec\prec_{\log}$ but   not   fully symmetric\cite[p.826]{DDSZ}.

\end{rem}

\bibliographystyle{amsalpha}

\end{document}